\documentclass[12pt,a4paper, oneside, bold,secthm,seceqn,amsthm,ussrhead,reqno]{article}
\usepackage[utf8]{inputenc}
\usepackage[english]{babel}
\usepackage[symbol]{footmisc}
\usepackage{amssymb,amsmath,amsthm,amsfonts,xcolor,enumerate,hyperref, comment,longtable, cleveref}
\usepackage{verbatim}
\usepackage{times}
\usepackage{cite}
\usepackage{pdflscape}
\usepackage{ulem}
\usepackage[mathcal]{euscript}
\usepackage{tikz}
\usepackage{hyperref}
 
\usepackage{cancel}
\usepackage{stmaryrd}
 
\usepackage{amsfonts}
\usepackage{amssymb}
\usepackage{times}
\usepackage{xcolor}
\usepackage{tkz-graph}
\usepackage{url}
\usepackage{float}
\usepackage{tasks}
\usepackage{array}

\usepackage{cite}
\usepackage{hyperref}

 \usepackage{fancyhdr} 
\usepackage{amsfonts}
\usepackage{amsmath}
\usepackage{eurosym}
\usepackage{geometry}

\usepackage{caption,booktabs}

\theoremstyle{plain}
\newtheorem{theorem}{Theorem}
\newtheorem{lemma}[theorem]{Lemma}
\newtheorem{proposition}[theorem]{Proposition}

\newtheorem{definition}[theorem]{Definition}
\newtheorem{corollary}[theorem]{Corollary}

\usetikzlibrary{arrows}

\newtheorem*{theoremA1}{Theorem A1}
\newtheorem*{theoremA2}{Theorem A2}

\newtheorem*{theoremG0}{Theorem G0}
\newtheorem*{theoremG1}{Theorem G1}
\newtheorem*{theoremG2}{Theorem G2}

\usepackage{fouriernc}

\begin{document}


 \bigskip

\noindent{\Large
The algebraic and geometric classification of \\
$\delta$-Novikov    algebras}

 \bigskip

\begin{center}

 {\bf
Hani Abdelwahab\footnote{Department of Mathematics, 
 Mansoura University,  Mansoura, Egypt; \ haniamar1985@gmail.com}, 
   Ivan Kaygorodov\footnote{CMA-UBI, University of  Beira Interior, Covilh\~{a}, Portugal; \  Moscow Center for Fundamental and Applied Mathematics, Moscow, Russia; \     kaygorodov.ivan@gmail.com}   \&
   Roman Lubkov\footnote{Department of Mathematics and Computer Science, Saint Petersburg State University, Russia; r.lubkov@spbu.ru, romanlubkov@yandex.ru}  
}

\end{center}

\ 

\noindent {\bf Abstract:}
{\it  
The notion of $\delta$-Novikov algebras was introduced recently as a generalization of  Novikov and bicommutative algebras.  It looks like $\delta$-Novikov algebras have a richer structure than Novikov algebras. 
So, unlike Novikov algebras, they have a $2$-dimensional simple algebra for $\delta=-1.$ 
The present paper is dedicated to the study of $3$-dimensional $\delta$-Novikov algebras for $\delta \notin \big\{0,1\big\}.$
The algebraic and geometric classifications of complex 
$3$-dimensional $\delta$-Novikov algebras  are given.  
As a corollary, we prove that there are no simple $3$-dimensional $\delta$-Novikov algebras.}

 \bigskip 

\noindent {\bf Keywords}:
{\it 
$\delta$-Novikov   algebras,
algebraic classification,
geometric classification.}

\bigskip 

 \
 
\noindent {\bf MSC2020}:  
17A30 (primary);
17D25,
14L30 (secondary).

	 \bigskip

\ 

\


\tableofcontents 
\newpage

\section{The algebraic classification of $\protect\delta $-Novikov algebras}

\subsection{Preliminaries: the algebraic   classification}
All the algebras below will be over $\mathbb C$ and all the linear maps will be $\mathbb C$-linear.
For simplicity, every time we write the multiplication table of an algebra 
the products of basic elements whose values are zero or can be recovered from the commutativity  or from the anticommutativity are omitted.
The notion of a nontrivial algebra means that the multiplication is nonzero.
In this section, we introduce the techniques used to obtain our main results (the techniques are similar to those considered  in \cite{afm}).

\medskip

Recall that an algebra $({\rm A}, \cdot)$ is called a  $\delta$-Novikov algebra
if it satisfies  the identities: 
\begin{longtable}{rcl}
       $\delta  (xy)z-x(yz) $&$ = $&$ \delta (yx)z-y(xz),$\\
        $(xy)z $&$=$&$ (xz)y.$
    \end{longtable}

Let $({\rm A}, \cdot)$ be an algebra. We consider the following two new products
on the underlying vector space ${\rm A}$ defined by 
\begin{center}
$[x,y] :=\frac{1}{2} (x\cdot y-y\cdot x).$
\end{center}
Let us denote  
${\rm A}^-:=({\rm A},
[\cdot,\cdot]).$

\begin{definition}
Let $({\rm A},[\cdot,\cdot])$ be a Lie algebra. 
Let ${\rm Z}^2_\delta({\rm A},{\rm A})$ be the set of all symmetric  bilinear maps $\theta :%
{\rm A}\times {\rm A} \to {\rm A}$ such that 
\begin{longtable}{ccccccccccccccccc}
$\delta  \big([[x,y],z]  $&$+$&$[\theta(x,y),z] $&$+$&$\theta([x,y],z)$&$ +$&$\theta(\theta(x,y),z) \big) $\\
& $-$&$
   \big([x,[y,z]] $&$ +$&$[x,\theta(y,z)] $&$+$&$\theta([x,y],z)$&$ +$&$\theta(\theta(x,y),z) \big)$ &$=$\\
$\delta  \big([[y,x],z] $&$ +$&$[\theta(y,x),z] $&$+$&$\theta([y,x],z) $&$+$&$\theta(\theta(y,x),z) \big)$\\
& $-$&$
   \big([y,[x,z]]  $&$+$&$[y,\theta(x,z)] $&$+$&$\theta([y,x],z) $&$+$&$\theta(\theta(y,x),z) \big);$
     \medskip 

\\
        $[[x,y],z]$&$+$& 
        $[\theta(x,y),z]$&$+$& 
        $\theta([x,y],z)$&$+$& 
        $\theta(\theta(x,y),z)$&$=$ \\   
       && $[[x,z],y]$&$+$& 
        $[\theta(x,z),y]$&$+$& 
        $\theta([x,z],y)$&$+$& 
        $\theta(\theta(x,z),y).$
        
\end{longtable}  
\end{definition}

\noindent  For $\theta \in {\rm Z}^2_\delta(
{\rm A},{\rm A})$ we define on ${\rm A}$ a product $%
*_{\theta} :{\rm A}\times {\rm A}\to {\rm A}$ by 
$
x *_{\theta} y := \theta(x,y).  $

\begin{lemma}
Let $({\rm A},\cdot)$ be a Lie algebra and $\theta \in {\rm Z}^2_\delta({\rm A},{\rm A})$. Then  $( {\rm A},\cdot_{\theta})$ is a
$\delta$-Novikov   algebra, 
where \begin{center}
     $x\cdot_{\theta} y := x \cdot y + x *_{\theta} y.$ 
\end{center}

\end{lemma}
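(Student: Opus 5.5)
The plan is a direct verification: expand both $\delta$-Novikov identities for the product $\cdot_\theta$ using bilinearity, and check that the resulting equalities are exactly the two defining conditions of ${\rm Z}^2_\delta({\rm A},{\rm A})$. Here $x\cdot y$ stands for the Lie bracket $[x,y]$ of ${\rm A}$, so $x\cdot_\theta y=[x,y]+\theta(x,y)$.

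First I compute a left-nested product. By bilinearity of $[\cdot,\cdot]$ and $\theta$,
\begin{center}
$(x\cdot_\theta y)\cdot_\theta z=[[x,y],z]+[\theta(x,y),z]+\theta([x,y],z)+\theta(\theta(x,y),z)$.
\end{center}
Replacing $(x,y,z)$ by $(x,z,y)$ gives the expansion of $(x\cdot_\theta z)\cdot_\theta y$. The identity $(xy)z=(xz)y$ for $\cdot_\theta$ is then literally the second condition in the definition of ${\rm Z}^2_\delta({\rm A},{\rm A})$. So the right commutativity identity holds.

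Next I compute a right-nested product:
\begin{center}
$x\cdot_\theta(y\cdot_\theta z)=[x,[y,z]]+[x,\theta(y,z)]+\theta(x,[y,z])+\theta(x,\theta(y,z))$.
\end{center}
Multiplying the left-nested expansion by $\delta$ and subtracting this gives $\delta(x\cdot_\theta y)\cdot_\theta z-x\cdot_\theta(y\cdot_\theta z)$. Swapping $x$ and $y$ gives the other side of the first $\delta$-Novikov identity. Equating the two sides should reproduce the first defining condition of ${\rm Z}^2_\delta({\rm A},{\rm A})$.

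The main obstacle is the bookkeeping in this step. In the displayed first condition, the last two terms of the second bracket read $\theta([x,y],z)+\theta(\theta(x,y),z)$. The genuine expansion of $x\cdot_\theta(y\cdot_\theta z)$ instead produces $\theta(x,[y,z])+\theta(x,\theta(y,z))$, and symmetry of $\theta$ only rewrites these as $\theta([y,z],x)+\theta(\theta(y,z),x)$. So I will read the first condition as the term-by-term expansion of $\delta(xy)z-x(yz)=\delta(yx)z-y(xz)$ for $\cdot_\theta$; the second bracket appears to be a slip in the displayed formula. Under that reading the first $\delta$-Novikov identity holds verbatim, and the lemma follows.

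Symmetry of $\theta$ is not needed for the identities themselves. It only guarantees that $\theta$ and $[\cdot,\cdot]$ are exactly the symmetric and antisymmetric parts of $\cdot_\theta$, which is what the later classification relies on.
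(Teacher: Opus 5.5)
Your proof is correct and is the same direct verification the paper relies on: the paper states the lemma without proof because the two conditions defining ${\rm Z}^2_\delta({\rm A},{\rm A})$ are intended to be exactly the term-by-term expansions of the two $\delta$-Novikov identities for $\cdot_\theta$. Your reading of the first condition is also right. As displayed, its second brackets should contain $\theta(x,[y,z])+\theta(x,\theta(y,z))$ and $\theta(y,[x,z])+\theta(y,\theta(x,z))$. Taken as written, the condition would even reject $\theta=(0,\Delta_{12},0)$ on $\mathcal{L}_{03}^{0}$ (try $x=e_1$, $y=e_2$, $z=e_1$), yet the paper itself uses that $\theta$ to obtain ${\rm N}_{05}^{0}$ and $\mathcal{G}_{05}^{0}$.
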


Now, let $({\rm A},\cdot)$ be an algebra and $\mathrm{{Aut}(%
{\rm A})}$ be the automorphism group of ${\rm A}$ with respect to
product $\cdot$. Then $\mathrm{{Aut}({\rm A})}$ acts on ${\rm Z}^2_\delta({\rm A},%
{\rm A})$ by 
\begin{equation*}
(\theta \ast \phi)(x,y) := \phi^{-1}\bigl(\theta \bigl(\phi(x),\phi(y) \bigr)%
\bigr),
\end{equation*}
where $\phi \in \mathrm{{Aut}({\rm A})}$ and $\theta \in {\rm Z}^2({\rm A}%
, {\rm A})$.

\begin{lemma}
Let $({\rm A},\cdot)$ be a Lie algebra and $\theta, \vartheta \in {\rm Z}^2_\delta(%
{\rm A},{\rm A})$. Then the   algebras $({\rm A}, \cdot_{\theta})$ and $({\rm A}, \cdot_{\vartheta})$ are isomorphic if and only if there exists $\phi \in \mathrm{{Aut}({\rm A})}$ satisfying $\theta \ast \phi
=\vartheta $.
\end{lemma}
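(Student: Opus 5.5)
The plan is to translate isomorphisms of the deformed algebras into the action of $\mathrm{Aut}({\rm A})$ on ${\rm Z}^2_\delta({\rm A},{\rm A})$. The key structural fact is that the Lie bracket and the symmetric cocycle can be recovered from $\cdot_\theta$. Indeed, $x\cdot y=[x,y]$ is anticommutative and $\theta$ is symmetric, so
\[
\tfrac12\bigl(x\cdot_\theta y - y\cdot_\theta x\bigr)=[x,y],\qquad \tfrac12\bigl(x\cdot_\theta y + y\cdot_\theta x\bigr)=\theta(x,y).
\]
In other words, $({\rm A},\cdot_\theta)^-=({\rm A},[\cdot,\cdot])$ for every $\theta$, and the decomposition of $\cdot_\theta$ into its anticommutative and symmetric parts is unique. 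By the preceding lemma, both $({\rm A},\cdot_\theta)$ and $({\rm A},\cdot_\vartheta)$ are $\delta$-Novikov algebras, although only the bilinear structure is used below.

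First, the sufficiency direction. Suppose $\phi\in\mathrm{Aut}({\rm A})$ satisfies $\theta\ast\phi=\vartheta$, that is, $\phi(\vartheta(x,y))=\theta(\phi(x),\phi(y))$. Since $\phi$ also preserves the bracket, we get
\[
\phi(x\cdot_\vartheta y)=\phi([x,y])+\phi(\vartheta(x,y))=[\phi(x),\phi(y)]+\theta(\phi(x),\phi(y))=\phi(x)\cdot_\theta\phi(y).
\]
Hence $\phi$ is an isomorphism from $({\rm A},\cdot_\vartheta)$ to $({\rm A},\cdot_\theta)$.

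Next, the necessity direction. Suppose $\phi:({\rm A},\cdot_\vartheta)\to({\rm A},\cdot_\theta)$ is an isomorphism; reversing the direction if needed costs nothing, since inverses of isomorphisms are isomorphisms. I would antisymmetrize and symmetrize the identity $\phi(x\cdot_\vartheta y)=\phi(x)\cdot_\theta\phi(y)$ separately, using linearity of $\phi$.
\begin{itemize}
\item The antisymmetric part gives $\phi([x,y])=[\phi(x),\phi(y)]$, so $\phi\in\mathrm{Aut}({\rm A})$ for the Lie product.
\item The symmetric part gives $\phi(\vartheta(x,y))=\theta(\phi(x),\phi(y))$, that is, $\vartheta=\theta\ast\phi$.
\end{itemize}

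The only subtle point is convention: the action is defined with $\phi^{-1}$ on the outside, so I must match the direction of the isomorphism to the formula. This is handled by the choice of direction above. There is no real obstacle beyond this bookkeeping.
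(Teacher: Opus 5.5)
Your proof is correct. The paper states this lemma without proof, as the standard step of the method it adapts, and your argument is exactly the standard one behind it: read off the Lie bracket and $\theta$ as the antisymmetric and symmetric parts of $\cdot_\theta$, then match the direction of the isomorphism to the convention $(\theta\ast\phi)(x,y)=\phi^{-1}\bigl(\theta(\phi(x),\phi(y))\bigr)$.
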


Hence, we have a procedure to classify the $\delta$-Novikov   algebras associated with a given Lie  algebra $({\rm A},\cdot)$. It consists of three steps:

\begin{enumerate}
\item[{\bf Step} $1.$] Compute ${\rm Z}^2_\delta({\rm A},{\rm A})$.

\item[{\bf Step} $2.$] Find the orbits of $\mathrm{{Aut}({\rm A})}$ on ${\rm Z}^2_\delta(
{\rm A},{\rm A})$.

\item[{\bf Step} $3.$] Choose a representative $\theta$ from each orbit and then
construct the $\delta$-Novikov algebra $({\rm A}, \cdot_{\theta})$.
\end{enumerate}

Let us introduce the following notations. Let $\{e_1,\dots,e_n\}$ be a fixed basis of an algebra $({\rm A},\cdot)$. Define 
$\mathrm{\Lambda }^2({\rm A},\mathbb{C})$ to be the space of all 
  symmetric bilinear forms on ${\rm A}$, that is, \begin{center}
 $\mathrm{\Lambda}^2(%
{\rm A},\mathbb{C}) := 
\langle \Delta_{ij} | 1\leq i\leq j \leq n\rangle$,
\end{center} where $\Delta_{ij}$ is the  symmetric  bilinear form $%
\Delta_{ij}:{\rm A}\times {\rm A} \to \mathbb{C}$ defined by 
\begin{equation*}
\Delta_{ij}( e_l,e_m) :=\left\{ 
\begin{tabular}{rl}
$1,$ & if $(i,j) = (l,m)$ or  $(i,j) = (m,l),$   \\ 
$0,$ & otherwise.%
\end{tabular}
\right.
\end{equation*}
Now, if $\theta \in {\rm Z}^2_\delta({\rm A},{\rm A})$ then $\theta$ can be
uniquely written as \begin{center}
     $\theta (x,y) = \sum_{i=1}^n B_i(x,y)e_i$,
\end{center} where $%
B_1,\dots, B_n$ are  symmetric  bilinear forms on ${\rm A}$. Also,
we may write $\theta = \big(B_{1},\dots ,B_n \big)$. Let $\phi^{-1}\in \mathrm{Aut}(%
{\rm A})$ be given by the matrix $( b_{ij})$. If \begin{center}
    $(\theta \ast
\phi)(x,y) = \sum_{i=1}^n B_i^{\prime }(x,y)e_i$,
\end{center} then $B_i^{\prime }=
\sum_{j=1}^n b_{ij}\phi^t B_j\phi$, whenever $i \in \{1,\dots,n\}$.

\begin{proposition}
\label{3-dim metalbleian}Let $\big( \mathcal{L},\left[ \cdot, \cdot \right] \big) $
be a nontrivial complex metabelian Lie algebra of dimension three. Then $%
\mathcal{L}$ is isomorphic to one of the following Lie algebras:

\begin{longtable}{llll}
$\mathcal{L}_{01}$ & $:$ & $[e_{1},e_{2}] = e_{3}$ \\

$\mathcal{L}_{02}$ & $:$ & $[e_{1},e_{2}] = e_{2}$ & $[e_{1},e_{3}] = e_{2} + e_{3}$ \\

$\mathcal{L}_{03}^{\alpha}$ & $:$ & $[e_{1},e_{2}] = e_{2}$ & $[e_{1},e_{3}] = \alpha e_{3}$ \\

\end{longtable}
\noindent
All listed algebras are non-isomorphic except: $\mathcal{L}_{03}^{\alpha}\cong \mathcal{L}_{03}^{\alpha ^{-1}}$.
\end{proposition}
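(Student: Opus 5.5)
A nontrivial metabelian Lie algebra $\mathcal{L}$ of dimension three has derived algebra $\mathcal{D}=[\mathcal{L},\mathcal{L}]$ abelian and nonzero. Since $\mathcal{L}$ is not perfect (a perfect $3$-dimensional Lie algebra is $\mathfrak{sl}_2$, which is not metabelian), we have $\dim \mathcal{D}\in\{1,2\}$. I will split the argument along this dimension and then show the resulting list is non-redundant.

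\emph{Case $\dim\mathcal{D}=1$.} Write $\mathcal{D}=\langle d\rangle$. The bracket has the form $[x,y]=\omega(x,y)d$ for a nonzero alternating form $\omega$. Since $\omega$ lives on a $3$-dimensional space, its radical $R$ has dimension $1$.
\begin{itemize}
\item If $d\notin R$, choose $e_1,e_2$ with $\omega(e_1,e_2)=1$ and $d\in\langle e_1,e_2\rangle$. Then $[d,\mathcal{L}]\neq 0$, which yields the non-nilpotent algebra $[e_1,e_2]=e_2$ after a suitable choice of basis with $e_2=d$, and $e_3$ spanning $R$. This is $\mathcal{L}_{03}^{0}$.
\item If $d\in R$, choose $e_3=d$ and $e_1,e_2$ with $\omega(e_1,e_2)=1$. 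This gives the Heisenberg algebra $\mathcal{L}_{01}$.
\end{itemize}

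\emph{Case $\dim\mathcal{D}=2$.} Pick $e_1\notin\mathcal{D}$. Since $\mathcal{D}$ is abelian, the bracket is determined by $T=\mathrm{ad}\,e_1|_{\mathcal{D}}$, and $T$ must be invertible because its image is all of $\mathcal{D}$. Replacing $e_1$ by $\lambda e_1+d'$ with $d'\in\mathcal{D}$ rescales $T$ by $\lambda$ and otherwise leaves it unchanged. Hence the isomorphism class is the class of the invertible $2\times2$ matrix $T$ up to conjugacy and nonzero scaling. The Jordan forms, normalised so that one eigenvalue equals $1$, are:
\begin{itemize}
\item $\mathrm{diag}(1,\alpha)$ with $\alpha\neq0$, giving $\mathcal{L}_{03}^{\alpha}$;
\item a single Jordan block $\begin{pmatrix}1&1\\0&1\end{pmatrix}$, giving $\mathcal{L}_{02}$.
\end{itemize}

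\emph{Non-isomorphism.} Start with the invariant $\dim\mathcal{D}$: it equals $1$ for $\mathcal{L}_{01}$ and $\mathcal{L}_{03}^{0}$, and $2$ for the others.
\begin{itemize}
\item $\mathcal{L}_{01}$ is nilpotent while $\mathcal{L}_{03}^{0}$ is not.
\item For $\dim\mathcal{D}=2$, any isomorphism maps $\mathcal{D}$ to $\mathcal{D}$. The conjugacy class of $T$ up to scaling is therefore an invariant. Diagonalisability separates $\mathcal{L}_{02}$ from every $\mathcal{L}_{03}^{\alpha}$.
\item Within the family $\mathcal{L}_{03}^{\alpha}$, the unordered eigenvalue ratio $\{\alpha,\alpha^{-1}\}$ is the invariant. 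The coincidence $\mathcal{L}_{03}^{\alpha}\cong\mathcal{L}_{03}^{\alpha^{-1}}$ is realised by swapping $e_2,e_3$ and rescaling $e_1$ by $\alpha^{-1}$.
\end{itemize}

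The only delicate step is the first case: one must show that $d$ outside the radical forces exactly $\mathcal{L}_{03}^{0}$. The remaining steps are routine Jordan-form bookkeeping.
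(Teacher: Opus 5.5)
\textbf{Verdict.} Your argument is correct. The paper gives no proof of this proposition to compare against. It quotes the result as known: it is the metabelian part of the standard classification of three-dimensional complex Lie algebras, namely everything except the abelian algebra and $\mathfrak{sl}_2$.

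\textbf{Comparison of routes.} Your proof derives the list from scratch by splitting on $\dim[\mathcal{L},\mathcal{L}]$. When $\dim\mathcal{D}=1$, you classify the rank-two alternating form $\omega$ by whether $d$ lies in its radical $R$. When $\dim\mathcal{D}=2$, the metabelian hypothesis gives $\mathrm{ad}(\lambda e_1+d')|_{\mathcal{D}}=\lambda T$, so the problem becomes invertible $2\times 2$ matrices up to conjugacy and nonzero scaling. The citation buys brevity. Your version buys a self-contained argument with explicit invariants: $\dim\mathcal{D}$, nilpotency, and the class of $T$ up to scaling. The exact isomorphism criterion $\mathcal{L}_{03}^{\alpha}\cong\mathcal{L}_{03}^{\beta}\iff\beta\in\{\alpha,\alpha^{-1}\}$ (for $\alpha,\beta\neq 0$) then follows directly.

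\textbf{Two small simplifications.}
\begin{itemize}
\item You do not need $\mathfrak{sl}_2$ to exclude $\dim\mathcal{D}=3$. If $\mathcal{D}=\mathcal{L}$, then $\mathcal{D}=[\mathcal{D},\mathcal{D}]=0$, a contradiction.
\item The step you call delicate is immediate. Take $e_2=d$, any $e_1$ with $\omega(e_1,d)=1$ (possible because $d\notin R$), and $e_3$ spanning $R$. These vectors are independent because $\omega$ is nondegenerate on $\langle e_1,e_2\rangle$, so $R\cap\langle e_1,e_2\rangle=0$. The only nonzero bracket is $[e_1,e_2]=e_2$, which is $\mathcal{L}_{03}^{0}$.
\end{itemize}
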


\begin{proposition}
\label{3-dim assoc}Let $\big({\rm A}, \cdot \big)$ be a nontrivial complex  $3$-dimensional
associative commutative algebra. Then ${\rm A}$ is isomorphic to one of
the following algebras:

\begin{longtable}{llllll}
${\rm A}_{01}$ & $:$ & $e_{1}\cdot e_{1} = e_{1}$ & $e_{2}\cdot e_{2} = e_{2}$ & \\ 
${\rm A}_{02}$ & $:$ & $e_{1}\cdot e_{1} = e_{1}$ & $e_{1}\cdot e_{2} = e_{2}$ & \\ 
${\rm A}_{03}$ & $:$ & $e_{1}\cdot e_{1} = e_{1}$ & & \\ 
${\rm A}_{04}$ & $:$ & $e_{1}\cdot e_{1} = e_{2}$ & & \\ 
${\rm A}_{05}$ & $:$ & $e_{1}\cdot e_{2} = e_{3}$ & & \\ 
${\rm A}_{06}$ & $:$ & $e_{1}\cdot e_{1} = e_{2}$ & $e_{1}\cdot e_{2} = e_{3}$ & \\ 
${\rm A}_{07}$ & $:$ & $e_{1}\cdot e_{1} = e_{1}$ & $e_{2}\cdot e_{2} = e_{2}$ & $e_{3}\cdot e_{3} = e_{3}$ \\ 
${\rm A}_{08}$ & $:$ & $e_{1}\cdot e_{1} = e_{1}$ & $e_{2}\cdot e_{2} = e_{2}$ & $e_{2}\cdot e_{3} = e_{3}$ \\ 
${\rm A}_{09}$ & $:$ & $e_{1}\cdot e_{1} = e_{1}$ & $e_{1}\cdot e_{2} = e_{2}$ & $e_{1}\cdot e_{3} = e_{3}$ \\ 
${\rm A}_{10}$ & $:$ & $e_{1}\cdot e_{1} = e_{1}$ & $e_{1}\cdot e_{2} = e_{2}$ & $e_{1}\cdot e_{3} = e_{3}$ & $e_{2}\cdot e_{2} = e_{3}$ \\ 
${\rm A}_{11}$ & $:$ & $e_{1}\cdot e_{1} = e_{1}$ & $e_{2}\cdot e_{2} = e_{3}$ & \\ 
\end{longtable}
 
\end{proposition}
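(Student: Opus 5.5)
We classify nontrivial $3$-dimensional commutative associative algebras ${\rm A}$ over $\mathbb{C}$ by splitting according to whether ${\rm A}$ is nilpotent. We use the standard structure theory of finite-dimensional commutative associative algebras: ${\rm A}$ decomposes as ${\rm A}=S\oplus N$ up to unitalization subtleties. Concretely, we use Wedderburn--Malcev for the unitalization ${\rm A}^\#={\rm A}\oplus\mathbb{C}1$, or equivalently the Peirce decomposition with respect to a maximal family of orthogonal idempotents. Over $\mathbb{C}$ the semisimple part is a direct sum of copies of $\mathbb{C}$. Hence either ${\rm A}$ is nilpotent, or ${\rm A}$ contains $k\in\{1,2,3\}$ pairwise orthogonal primitive idempotents $e_1,\dots,e_k$. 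In the latter case ${\rm A}=\bigoplus_{i} e_i{\rm A}\oplus {\rm A}_0$, where each $e_i{\rm A}$ is a local unital algebra and ${\rm A}_0=\{x: e_ix=0 \ \forall i\}$ is a nilpotent ideal annihilated by all $e_i$.

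\textbf{Nilpotent case.} Here ${\rm A}^2\neq 0$ and ${\rm A}^3\subsetneq{\rm A}^2$.
\begin{itemize}
\item If ${\rm A}^3=0$, the product is a nonzero symmetric bilinear map ${\rm A}/{\rm A}^2\times{\rm A}/{\rm A}^2\to{\rm A}^2$.
  \begin{itemize}
  \item If $\dim{\rm A}^2=1$, this is one symmetric form on a space of dimension $2$ modulo ${\rm A}^2$. Diagonalization gives rank $1$ or rank $2$. Rank $1$ gives $e_1e_1=e_2$, which is ${\rm A}_{04}$ (with $e_3$ in the annihilator). Rank $2$ is equivalent to the form $xy$, which is ${\rm A}_{05}$.
  \item $\dim {\rm A}^2=2$ with ${\rm A}^3=0$ is impossible, since then $\dim{\rm A}/{\rm A}^2=1$ forces ${\rm A}^2$ to be spanned by $e_1^2$.
  \end{itemize}
\item If ${\rm A}^3\neq0$, then ${\rm A}$ is generated by one element, so ${\rm A}$ is ${\rm A}_{06}$.
\end{itemize}

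\textbf{Non-nilpotent case.} We enumerate by the number $k$ of idempotents and the dimensions of the local blocks $e_i{\rm A}$ and of ${\rm A}_0$. A local unital block of dimension $d$ is $\mathbb{C}$ for $d=1$, $\mathbb{C}[x]/(x^2)$ for $d=2$, and $\mathbb{C}[x]/(x^3)$ or $\mathbb{C}[x,y]/(x,y)^2$ for $d=3$. The block ${\rm A}_0$ is a nilpotent algebra of dimension at most $2$: either the zero product, or $e^2=f$.
\begin{itemize}
\item $k=3$: this gives ${\rm A}_{07}$.
\item $k=2$: the possibilities are $\mathbb{C}\oplus\mathbb{C}[x]/(x^2)$, which is ${\rm A}_{08}$, and $\mathbb{C}\oplus\mathbb{C}\oplus$(a $1$-dimensional zero algebra), which is ${\rm A}_{01}$.
\item $k=1$: the local block has dimension $d\le3$, and ${\rm A}_0$ has dimension $3-d$.
  \begin{itemize}
  \item $d=1$: ${\rm A}_0$ is either a trivial $2$-dimensional algebra, giving ${\rm A}_{03}$, or $e_2e_2=e_3$, giving ${\rm A}_{11}$.
  \item $d=2$: this gives ${\rm A}_{02}$.
  \item $d=3$: the two local algebras are $\mathbb{C}[x,y]/(x,y)^2$, giving ${\rm A}_{09}$, and $\mathbb{C}[x]/(x^3)$. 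In the latter, with $x=e_2$ and $x^2=e_3$, we get ${\rm A}_{10}$.
  \end{itemize}
\end{itemize}

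Finally, the listed algebras are pairwise non-isomorphic. We check this with the invariants $\dim{\rm A}^2$, nilpotency, the number of orthogonal primitive idempotents, the dimensions of the blocks $e_i{\rm A}$, and $\dim\operatorname{Ann}{\rm A}$.

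\textbf{Main obstacle.} The main obstacle is justifying the decomposition ${\rm A}=\bigoplus e_i{\rm A}\oplus{\rm A}_0$ with ${\rm A}_0$ annihilated by the idempotents. We argue this via Peirce decomposition. If ${\rm A}$ is not nilpotent, it is not nil, so it has a nonzero idempotent (a standard fact for finite-dimensional algebras). For an idempotent $e$ we have ${\rm A}=e{\rm A}\oplus(1-e){\rm A}$, and commutativity forces these summands to multiply to zero. We then iterate inside $(1-e){\rm A}$ until the remainder is nil, hence nilpotent.
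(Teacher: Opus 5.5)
The paper gives no proof of this proposition. It is quoted as a known classification and used only as input: for the case where ${\rm N}^-$ or $\mathcal{G}^-$ has zero multiplication in Theorems A1 and A2, and for the component $\overline{\mathcal{O}({\rm A}_{07})}$ in the geometric part. Your argument is therefore a self-contained derivation rather than a reworking of the paper's proof, and it is correct.

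Your key structural step is ${\rm A}=e{\rm A}\oplus{\rm A}_0$, where $e$ is the sum of a maximal family of orthogonal primitive idempotents, $e{\rm A}$ is unital, and ${\rm A}_0=\{x-ex : x\in{\rm A}\}$ is a nilpotent ideal with $e{\rm A}\cdot{\rm A}_0=0$. This reduces everything to local unital algebras of dimension at most $3$ and nilpotent algebras of dimension at most $3$. Your enumeration of both yields exactly ${\rm A}_{01},\dots,{\rm A}_{11}$. In the nilpotent split ${\rm A}^3=0$ versus ${\rm A}^3\neq 0$, the second case works because the strict chain ${\rm A}\supsetneq{\rm A}^2\supsetneq{\rm A}^3\supsetneq 0$ forces $\dim{\rm A}/{\rm A}^2=1$, hence a single generator.

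The paper's citation buys brevity. Your route shows transparently why the list is complete. Since both summands are canonical (${\rm A}_0=\{x : ex=0\}$), it also gives a clean isomorphism criterion.

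Two small repairs are needed.
\begin{enumerate}
\item In the iteration, choose each idempotent primitive, or split the unital part into local factors afterwards. Primitivity is what makes $e_i{\rm A}$ local.
\item The invariants you list for non-isomorphism do not separate ${\rm A}_{09}$ from ${\rm A}_{10}$. Both are local unital of dimension $3$ with ${\rm A}^2={\rm A}$ and zero annihilator. Add, for example, whether the radical $\langle e_2,e_3\rangle$ squares to zero: it does in ${\rm A}_{09}$, but not in ${\rm A}_{10}$, where $e_2e_2=e_3$.
\end{enumerate}
The proposition only asserts that every such algebra is isomorphic to one on the list, so the second point affects only your extra non-isomorphism claim.
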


\newpage 
\subsection{The algebraic classification of anti-Novikov algebras}

\begin{theoremA1}
Let ${\rm N}$\ be a complex $3$-dimensional anti-Novikov algebra.
Then ${\rm N}$ is an associative commutative algebra listed in {\rm Proposition %
\ref{3-dim assoc}} or isomorphic to one of the following algebras\footnote{For receiving similar multiplication tables we have to apply the basis change $e_1:=\frac12e_1$ in algebras ${\rm N}_{04}, {\rm N}_{05}^{\alpha}, {\rm N}_{06}, {\rm N}_{07}, {\rm N}_{08}, {\rm N}_{11}, {\rm N}_{14}$, and ${\rm N}_{16}$.}:

\begin{longtable}{lllllll}
${\rm N}_{01}$ & $:$ & $e_{1}\cdot e_{2}=e_{3}$ & $e_{2}\cdot e_{1}=-e_{3}$ \\

${\rm N}_{02}^{\alpha}$ & $:$ & $e_{1}\cdot e_{1}=e_{3}$ & $e_{1}\cdot e_{2}=e_{3}$ & $e_{2}\cdot e_{1}=-e_{3}$ & $e_{2}\cdot e_{2}=\alpha e_{3}$ \\

${\rm N}_{03}^{\alpha}$ & $:$ & $e_{1}\cdot e_{1}=e_{2}$ & $e_{1}\cdot e_{2}=\big( \alpha +1\big) e_{3}$ & $e_{2}\cdot e_{1}=\big( \alpha -1\big) e_{3}$ \\

${\rm N}_{04}$ & $:$ & $e_{1}\cdot e_{2}=e_{2}$ & $e_{1}\cdot e_{3}=e_{2}+e_{3}$ \\

${\rm N}_{05}^{\alpha}$ & $:$ & $e_{1}\cdot e_{2}=e_{2}$ & $e_{1}\cdot e_{3}=\alpha e_{3}$ \\

${\rm N}_{06}$ & $:$ & $e_{1}\cdot e_{1}=e_{1}$ & $e_{2}\cdot e_{1}=-e_{2}$ & $e_{1}\cdot e_{3}=e_{3}$ & $e_{3}\cdot e_{1}=e_{3}$ \\

${\rm N}_{07}$ & $:$ & $e_{1}\cdot e_{1}=e_{1}$ & $e_{2}\cdot e_{1}=-e_{2}$ & $e_{3}\cdot e_{3}=e_{3}$ \\

${\rm N}_{08}$ & $:$ & $e_{1}\cdot e_{1}=e_{1}$ & $e_{2}\cdot e_{1}=-e_{2}$ \\

${\rm N}_{09}$ & $:$ & $e_{1}\cdot e_{1}=2e_{1}$ & $e_{1}\cdot e_{2}=e_{3}$ & $e_{2}\cdot e_{1}=e_{3}-2e_{2}$ & $e_{1}\cdot e_{3}=2e_{3}$ & $e_{3}\cdot e_{1}=2e_{3}$ \\

${\rm N}_{10}$ & $:$ & $e_{1}\cdot e_{1}=e_{3}$ & $e_{1}\cdot e_{2}=2e_{2}$ \\

${\rm N}_{11}$ & $:$ & $e_{1}\cdot e_{2}=e_{2}$ & $e_{3}\cdot e_{3}=e_{3}$ \\

${\rm N}_{12}$ & $:$ & $e_{1}\cdot e_{2}=2e_{2}$ & $e_{2}\cdot e_{2}=e_{1}$ \\

${\rm N}_{13}$ & $:$ & $e_{1}\cdot e_{2}=2e_{2}$ & $e_{2}\cdot e_{2}=e_{1}$ & $e_{3}\cdot e_{3}=e_{3}$ \\

${\rm N}_{14}$ & $:$ & $e_{1}\cdot e_{2}=e_{2}$ & $e_{2}\cdot e_{2}=e_{3}$ \\

${\rm N}_{15}$ & $:$ & $e_{1}\cdot e_{1}=e_{3}$ & $e_{1}\cdot e_{2}=2e_{2}$ &  $e_{2}\cdot e_{2}=e_{3}$ \\

${\rm N}_{16}$ & $:$ & $e_{1}\cdot e_{1}=e_{1}$ & $e_{2}\cdot e_{1}=-e_{2}$ & $e_{3}\cdot e_{1}=-e_{3}$ \\
\end{longtable}

\noindent
All listed algebras are non-isomorphic except: ${\rm N}_{05}^{\alpha
}\cong {\rm N}_{05}^{\alpha ^{-1}}$.
\end{theoremA1}

\begin{proof}
Let ${\rm N}$\ be a complex $3$-dimensional anti-Novikov algebra.
Then ${\rm N}^{-}$ is metabelian \cite{kv16}. If ${\rm N}^{-}$ has the zero multiplication, then $%
{\rm N}$\ is commutative and associative. Otherwise, by Proposition~\ref%
{3-dim metalbleian}, we may assume ${\rm N}^{-}\in \big\{\mathcal{L}_{01},%
\mathcal{L}_{02},\mathcal{L}_{03}^{\alpha }\big\}$. So we study the following
cases:

\begin{enumerate}[I.]
    \item 
\underline{${\rm N}^{-}=\mathcal{L}_{01}$.} Choose an arbitrary element $%
\theta =\big( B_{1},B_{2},B_{3}\big) \in {\rm Z}_{-1}^{2}\big( 
\mathcal{L}_{01},\mathcal{L}_{01}\big) $. Then $\theta \in \left\{ \eta
_{1},\ldots ,\eta _{4}\right\} $ where%
\begin{longtable}{lcll}
$\eta _{1} $&$=$&$\big( 0,\ 0,\ \alpha _{1}\Delta _{11}+\alpha
_{2}\Delta _{22}+\alpha _{3}\Delta _{12}\big) ,$ \\
$\eta _{2} $&$=$&$\big( 0,\ \alpha _{1}\Delta _{11}, \ \alpha
_{2}\Delta _{11}+\alpha _{3}\Delta _{12}\big) ,$ \\
$\eta _{3} $&$=$&$\big( \alpha _{1}\Delta _{22},\ 0,\ \alpha
_{2}\Delta _{22}+\alpha _{3}\Delta _{12}\big) ,$ \\
$\eta _{4} $&$=$&$\left(
\begin{array}{ll}
\alpha _{1}\Delta _{11}+{\alpha _{2}^{2}}{\alpha^{-1} _{1}}\Delta _{22}+\alpha _{2}\Delta _{12}, &-{\alpha _{1}^{2}}{\alpha^{-1} _{2}}\Delta _{11}-\alpha _{2}\Delta
_{22}-\alpha _{1}\Delta _{12},\\
\multicolumn{2}{r}{\alpha _{3}\Delta
_{11}+\alpha _{4}\Delta _{22}+\frac{\alpha _{3}\alpha
_{2}^{2}+\alpha _{4}\alpha _{1}^{2}}{2\alpha _{1}\alpha _{2}}\Delta
_{12}}
\end{array} \right)_{\alpha _{1}\alpha _{2}\neq 0},$
\end{longtable}%

for some $\alpha _{1},\alpha _{2},\alpha _{3},\alpha _{4}\in \mathbb{C}$.
The automorphism group of $\mathcal{L}_{01}$, $\mathrm{Aut}\big( \mathcal{L}%
_{01}\big) $, consists of the invertible matrices of the following form:%
\begin{equation*}
\phi =%
\begin{pmatrix}
a_{11} & a_{12} & 0 \\ 
a_{21} & a_{22} & 0 \\ 
a_{31} & a_{32} & a_{11}a_{22}-a_{12}a_{21}%
\end{pmatrix}%
.
\end{equation*}

\begin{itemize}
\item $\theta =\eta _{1}$. \ Write $\theta \ast \phi =\big( 0,  \ 0 , \ \beta
_{1}\Delta _{11}+\beta _{2}\Delta _{22}+\beta
_{3}\Delta _{12}\big) $. Then%
\begin{longtable}{lcl}
$\beta _{1} $&$=$&$(a_{11}a_{22}-a_{12}a_{21})^{-1} \big( \alpha
_{1}a_{11}^{2}+2\alpha _{3}a_{11}a_{21}+\alpha _{2}a_{21}^{2}\big) ,$ \\
$\beta _{2} $&$=$&$(a_{11}a_{22}-a_{12}a_{21})^{-1}\big( \alpha
_{1}a_{12}^{2}+2\alpha _{3}a_{12}a_{22}+\alpha _{2}a_{22}^{2}\big) ,$ \\
$\beta _{3} $&$=$&$(a_{11}a_{22}-a_{12}a_{21})^{-1}\big( \alpha
_{1}a_{11}a_{12}+\alpha _{3}a_{11}a_{22}+\alpha _{3}a_{12}a_{21}+\alpha
_{2}a_{21}a_{22}\big) .$
\end{longtable}
Then \[
\begin{pmatrix} 
\beta_{1} & \beta_{3} \\ 
\beta_{3} & \beta_{2}
\end{pmatrix}
= \frac{1}{a_{11}a_{22} - a_{12}a_{21}}
\begin{pmatrix}
a_{11} & a_{21} \\ 
a_{12} & a_{22}
\end{pmatrix}
\begin{pmatrix} 
\alpha_{1} & \alpha_{3} \\ 
\alpha_{3} & \alpha_{2}
\end{pmatrix}
\begin{pmatrix}
a_{11} & a_{12} \\ 
a_{21} & a_{22}
\end{pmatrix}.
\]

  Thus, up to a scaler,
$  
\begin{pmatrix} 
\beta _{1} & \beta _{3} \\ 
\beta _{3} & \beta _{2}%
\end{pmatrix}%
 $ and $  
\begin{pmatrix} 
\alpha _{1} & \alpha _{3} \\ 
\alpha _{3} & \alpha _{2}%
\end{pmatrix}%
  $ are equivalent. Since $ 
\begin{pmatrix} 
\alpha _{1} & \alpha _{3} \\ 
\alpha _{3} & \alpha _{2}%
\end{pmatrix}%
  $ is symmetric, we may assume without any loss of generality that $%
\alpha _{3}=0$. Then we have the following cases:

\begin{itemize}
\item $\big( \alpha _{1},\alpha _{2}\big) =\big( 0,0\big) $. Then we
get the algebra ${\rm N}_{01}$.

\item $\big( \alpha _{1},\alpha _{2}\big) \neq \big( 0,0\big) $. Let
$\phi=\varphi_1$ if $\alpha_1 \neq 0$ or $\phi=\varphi_2$  if $%
\alpha_1 =0$:%
\begin{equation*}
\varphi_1 =\begin{pmatrix} 
1 & 0 & 0 \\ 
0 & \alpha _{1} & 0 \\ 
0 & 0 & \alpha _{1}%
\end{pmatrix}%
, \  
\varphi_2 =\begin{pmatrix}
0 & \alpha _{2} & 0 \\ 
1 & 0 & 0 \\ 
0 & 0 & -\alpha _{2}%
\end{pmatrix}.
\end{equation*}%
Then $\theta \ast \phi =\big( 0,\ 0,\ \Delta _{11}+\beta
_{2}\Delta _{22}\big) $. Hence we get the algebras ${\rm N}%
_{02}^{\alpha }$. Moreover,   ${\rm N}_{02}^{\alpha } \cong
{\rm N}_{02}^{\beta }$  if and only if $\alpha =\beta $.
\end{itemize}

\item $\theta =\eta _{2}$. Without any loss of generality, we may assume $%
\alpha _{1}\neq 0$ since otherwise we are back in the case $\theta =\eta
_{1} $. Let $\phi $ be the following automorphism: 
\begin{equation*}
\phi =%
\begin{pmatrix}
1 & 0 & 0 \\ 
0 & \alpha _{1} & 0 \\ 
0 & \alpha _{2} & \alpha _{1}%
\end{pmatrix}%
.
\end{equation*}%
Then $\theta \ast \phi =\big( 0,  \ \Delta _{11},\ \alpha
_{3}\Delta _{12}\big) $. Hence we get the algebras ${\rm N}%
_{03}^{\alpha }$.$\allowbreak $ Furthermore,   ${\rm N}%
_{03}^{\alpha } \cong {\rm N}_{03}^{\beta }$   if and only
if $\alpha =\beta $.

\item $\theta =\eta _{3}$. Without any loss of generality, we may assume $%
\alpha _{1}\neq 0$ since otherwise we are back in the case $\theta =\eta
_{1} $. Let $\phi $ be the following automorphism: 
\begin{equation*}
\phi =%
\begin{pmatrix}
0 & \alpha _{1} & 0 \\ 
1 & 0 & 0 \\ 
0 & \alpha _{2} & \alpha _{1}%
\end{pmatrix}%
.
\end{equation*}%
Then $\theta \ast \phi =\big( 0, \ \Delta _{11},\ \alpha
_{3}\Delta _{12}\big) $. So we get the algebras ${\rm N}%
_{03}^{\alpha }$.

\item $\theta =\eta _{4}$. Let $\phi $ be the following automorphism:%
\begin{equation*}
\phi =%
\begin{pmatrix}
1 & \alpha _{1} & 0 \\ 
0 & -{\alpha _{1}^{2}}{\alpha^{-1} _{2}} & 0 \\ 
0 & \alpha _{3} & -{\alpha _{1}^{2}}{\alpha^{-1} _{2}}%
\end{pmatrix}%
.
\end{equation*}%
Then $\theta \ast \phi =\big( 0,\ \Delta _{11},\ \frac{\alpha_{1}^{2}\alpha _{4}-\alpha _{2}^{2}\alpha _{3}}{2\alpha _{1}\alpha _{2}}%
\Delta _{12}\big) $. So we get the algebras ${\rm N}%
_{03}^{\alpha }$.
\end{itemize}

\item 
\underline{${\rm N}^{-}=\mathcal{L}_{02}$.} Choose an arbitrary element $%
\theta =\big( B_{1},B_{2},B_{3}\big) \in {\rm Z}_{-1 }^{2}\big( \mathcal{L}%
_{02},\mathcal{L}_{02}\big) $. Then 
\begin{center}
    $\theta =\big( 0, \ 
\alpha_{1}\Delta _{11}+\Delta _{12}+\Delta _{13},\ 
\alpha_{2}\Delta _{11}+\Delta _{13}\big) $ for some $\alpha
_{1},\alpha _{2}\in \mathbb{C}$.
\end{center} The automorphism group of $\mathcal{L}_{02}$%
, $\mathrm{Aut}\big( \mathcal{L}_{02}\big) $, consists of the invertible
matrices of the following form:%
\begin{equation*}
\phi =%
\begin{pmatrix}
1 & 0 & 0 \\ 
a_{21} & a_{22} & a_{23} \\ 
a_{31} & 0 & a_{22}%
\end{pmatrix}%
.
\end{equation*}%
Now, we choose $\phi $ \ to be the following automorphism:%
\begin{equation*}
\phi =%
\begin{pmatrix}
1 & 0 & 0 \\ 
\frac{\alpha _{2}-\alpha _{1}}{2} & 1 & 0 \\ 
-\frac{\alpha _{2}}{2} & 0 & 1%
\end{pmatrix}%
.
\end{equation*}%
Then $\theta \ast \phi =\big( 0, \ \Delta _{12}+\Delta_{13},\ \Delta _{13}\big) $. So we get the algebras ${\rm N}%
_{04}$.

\item 
\underline{${\rm N}^{-}=\mathcal{L}_{03}^{-1}$.} Choose an
arbitrary element $\theta =\big( B_{1},B_{2},B_{3}\big) \in {\rm Z}_{-1}^{2}\big( \mathcal{L}_{03}^{-1},\mathcal{L}_{03}^{-1}\big) $. Then 
\begin{center}
    $\theta =( 0,\ \alpha _{1}\Delta_{11}+\Delta _{12},\ \alpha _{2}\Delta _{11}-\Delta
_{13}) $ for some $\alpha _{1},\alpha _{2}\in \mathbb{C}$.
\end{center} The automorphism
group of $\mathcal{L}_{03}^{-1}$, $\mathrm{Aut}\big( \mathcal{L}%
_{03}^{-1}\big) $, consists of the invertible matrices of the
following form:%
\begin{equation*}
\begin{pmatrix}
1 & 0 & 0 \\ 
a_{21} & a_{22} & 0 \\ 
a_{31} & 0 & a_{33}%
\end{pmatrix}%
, \ 
\begin{pmatrix}
-1 & 0 & 0 \\ 
a_{21} & 0 & a_{23} \\ 
a_{31} & a_{32} & 0%
\end{pmatrix}%
.
\end{equation*}%
Now, we choose $\phi $ \ to be the following automorphism:%
\begin{equation*}
\phi =%
\begin{pmatrix}
1 & 0 & 0 \\ 
-\frac{\alpha _{1}}{2} & 1 & 0 \\ 
\frac{\alpha _{2}}{2} & 0 & 1%
\end{pmatrix}%
.
\end{equation*}%
Then $\theta \ast \phi =\big( 0,\ \Delta _{12},\ -\Delta
_{13}\big) $. Hence we get the algebra ${\rm N}_{05}^{-1}$.

\item 
\underline{${\rm N}^{-}=\mathcal{L}_{03}^{0}$.} Choose an
arbitrary element $\theta =\big( B_{1},B_{2},B_{3}\big) \in {\rm Z}_{-1}^{2}\big( \mathcal{L}_{03}^{0},\mathcal{L}_{03}^{0}\big) $. Then $\theta \in \big\{ \eta _{1},\ldots ,\eta _{13}\big\},$
where

 \begin{longtable}{lcl}
\(\eta_1\) & $=$ & \(\big( 2\Delta_{11},\  \alpha_1 \Delta_{11} - \Delta_{12},\  2\Delta_{13} + \alpha_2 \Delta_{33} \big)\), \\ 
\(\eta_2\) & $=$ & \(\big( 2\Delta_{11},\ \alpha_1 \Delta_{11} - \Delta_{12},\ \alpha_2 \Delta_{33} \big)\), \\ 
\(\eta_3\) & $=$ & \(\big( 2\Delta_{11},\ \alpha_1 \Delta_{11} - \Delta_{12},\ \alpha_2 \Delta_{11} + \alpha_3 \Delta_{13} + {\alpha_3(\alpha_3 - 2)}{\alpha^{-1}_2} \Delta_{33} \big)\), \\ 
\(\eta_4\) & $=$ & \(\big( 2\Delta_{11},\ \alpha_1 \Delta_{11} - \Delta_{12},\ \alpha_2 \Delta_{11} + \alpha_3 \Delta_{12} + 2\Delta_{13} \big)\), \\ 
\(\eta_5\) & $=$ & \(\big( 0,  \ \alpha_1 \Delta_{11} + \Delta_{12}, \ \alpha_2 \Delta_{11} + {\alpha_3^2}{\alpha^{-1}_2} \Delta_{33} + \alpha_3 \Delta_{13} \big)\), \\ 
\(\eta_6\) & $=$ & \(\big( \alpha_1 \Delta_{22}, \ \Delta_{12},\ \alpha_2 \Delta_{33} \big)\), \\ 
\(\eta_7\) & $=$ & \(\big( 0,\ \alpha_1 \Delta_{11} + \Delta_{12},\ \alpha_2 \Delta_{33} \big)\), \\ 
\(\eta_8\) & $=$ & \(\big( 0,\ \Delta_{12}, \ \alpha_1 \Delta_{11} + \alpha_2 \Delta_{22} \big)\), \\ 
\(\eta_9\) & $=$ & \(\big( 0,\ \alpha_1 \Delta_{11} + \Delta_{12},\  \alpha_2 \Delta_{11} \big)\), \\ 
\(\eta_{10}\) & $=$ & \(\big( 0,\   {2\alpha_3}{\alpha^{-1}_2} \Delta_{11} + \Delta_{12},\ \alpha_1 \Delta_{11} + \alpha_2 \Delta_{22} + \alpha_3 \Delta_{12} \big)\), \\ 
\(\eta_{11}\) & $=$ & \(\big( \alpha_1 \Delta_{22}, \ \Delta_{12}, \ -{\alpha_2 \alpha_3}{\alpha^{-1}_1} \Delta_{11} + \alpha_3 \Delta_{22} - {\alpha_1 \alpha_2}{\alpha^{-1}_3} \Delta_{33} + \alpha_2 \Delta_{13} \big)\), \\ 
\(\eta_{12}\) & $=$ & \(\big( (1 - \alpha_1) \Delta_{11} + \frac{\alpha_2^2}{1 - \alpha_1} \Delta_{22} + \alpha_2 \Delta_{12},\  \frac{1 - \alpha_1^2}{\alpha_2} \Delta_{11} - \alpha_2 \Delta_{22} + \alpha_1 \Delta_{12}, \ \alpha_3 \Delta_{33} \big)\), \\ 
\(\eta_{13}\) & $=$ & 
\(\left( 
\begin{array}{ll}
(1 - \alpha_1) \Delta_{11} + \frac{\alpha_2^2}{1 - \alpha_1} \Delta_{22} + \alpha_2 \Delta_{12}, & 
{(1 - \alpha_1^2)}{\alpha^{-1}_2} \Delta_{11} - \alpha_2 \Delta_{22} + \alpha_1 \Delta_{12}, \\ 
\multicolumn{2}{r}{{(\alpha_3 - \alpha_1 \alpha_3 - \alpha_3 \alpha_4)}{\alpha^{-1}_2} \Delta_{11} + \frac{\alpha_2 \alpha_3}{1 - \alpha_1} \Delta_{22} - {\alpha_2 \alpha_4}{\alpha^{-1}_3} \Delta_{33} + \alpha_3 \Delta_{12} + \alpha_4 \Delta_{13} 
}
\end{array}
\right)\), \\ 
\end{longtable}
for some $\alpha _{1},\alpha _{2},\alpha _{3},\alpha _{4}\in 
\mathbb{C}
.$ The automorphism group of $\mathcal{L}_{03}^{0}$, $\mathrm{Aut}%
\big( \mathcal{L}_{03}^{0}\big) $, consists of the invertible
matrices of the following form:%
\begin{equation*}
\phi =%
\begin{pmatrix}
1 & 0 & 0 \\ 
a_{21} & a_{22} & 0 \\ 
a_{31} & 0 & a_{33}%
\end{pmatrix}%
.
\end{equation*}

\begin{itemize}
\item $\theta =\eta _{1}$. Let $\phi=\varphi_1$   if $\alpha _{2}=0$ or 
$\phi=\varphi_2$  if $\alpha _{2}\neq 0$.%
\begin{equation*}
\varphi_1=\begin{pmatrix}
1 & 0 & 0 \\ 
\frac{\alpha _{1}}{4} & 1 & 0 \\ 
0 & 0 & 1%
\end{pmatrix}%
, \ 
\varphi_2=\begin{pmatrix}
1 & 0 & 0 \\ 
\frac{\alpha _{1}}{4} & 1 & 0 \\ 
-\frac{2}{\alpha _{2}} & 0 & \frac{1}{\alpha _{2}}%
\end{pmatrix}%
.
\end{equation*}%
Then $\theta \ast \phi \in \big\{\big( 2\Delta _{11}, \ -\Delta_{12}, \ 2\Delta _{13}\big), \ \big( 2\Delta_{11}, \ -\Delta _{12}, \ \Delta _{33}\big) \big\}$. So we get the
algebras ${\rm N}_{06}$ and ${\rm N}_{07}$.

\item $\theta =\eta _{2}$. Let $\phi=\varphi_1 $  if $\alpha _{2}\neq 0$ or  $\phi=\varphi_2$  if $\alpha _{2}=0$.%
\begin{equation*}
\varphi_1=\begin{pmatrix}
1 & 0 & 0 \\ 
\frac{\alpha _{1}}{4} & 1 & 0 \\ 
0 & 0 & \frac{1}{\alpha _{2}}%
\end{pmatrix}%
, \ 
\varphi_2=\begin{pmatrix}
1 & 0 & 0 \\ 
\frac{\alpha _{1}}{4} & 1 & 0 \\ 
0 & 0 & 1%
\end{pmatrix}%
.
\end{equation*}%
Then $\theta \ast \phi \in \big\{\big( 2\Delta _{11},\ -\Delta_{12},\ \Delta _{33}\big) ,\ 
\big( 2\Delta_{11}, \ -\Delta _{12},\ 0\big) \big\}$. Hence we get the algebras     ${\rm N}_{07}$  and ${\rm N}_{08}$.

\item $\theta =\eta _{3}$. Let $\phi=\varphi_1 $   if $\alpha _{3}=2$ or $\phi=\varphi_2$   if $\alpha _{3}=0$, or $\phi=\varphi_3$  if $\alpha _{3}\big( \alpha _{3}-2\big) \neq 0$:%
\begin{equation*}
\varphi_1=\begin{pmatrix}
1 & 0 & 0 \\ 
\frac{\alpha _{1}}{4} & 1 & 0 \\ 
-\frac{\alpha _{2}}{2} & 0 & 1%
\end{pmatrix}%
, \ 
\varphi_2=\begin{pmatrix}
1 & 0 & 0 \\ 
\frac{\alpha _{1}}{4} & 1 & 0 \\ 
\frac{\alpha _{2}}{2} & 0 & 1%
\end{pmatrix}%
, \ 
\varphi_3=\begin{pmatrix}
1 & 0 & 0 \\ 
\frac{\alpha _{1}}{4} & 1 & 0 \\ 
-\frac{\alpha _{2}}{\alpha _{3}-2} & 0 & \frac{\alpha _{2}}{\alpha
_{3}\big( \alpha _{3}-2\big) }%
\end{pmatrix}%
.
\end{equation*}%
Then \begin{center}$\theta \ast \phi \in \big\{\big( 2\Delta _{11}, \ -\Delta_{12},\ 2\Delta _{13}\big) ,\ 
\big( 2\Delta_{11},\ -\Delta _{12},\ 0\big), \ 
\big( 2\Delta_{11}, \ -\Delta _{12}, \ \Delta _{33}\big) \big\}$.\end{center} So we get the
algebras ${\rm N}_{06},$ \ ${\rm N}_{08}$, and ${\rm N}_{07}$.

\item $\theta =\eta _{4}$. Let $\phi=\varphi_1$  if $\alpha _{3}=0$ or $\phi=\varphi_2$  if $\alpha _{3}\neq 0$:%
\begin{equation*}
\varphi_1=\begin{pmatrix}
1 & 0 & 0 \\ 
\frac{\alpha _{1}}{4} & 1 & 0 \\ 
-\frac{\alpha_{2}}{2} & 0 & 1%
\end{pmatrix}%
, \ 
\varphi_2=\begin{pmatrix}
1 & 0 & 0 \\ 
\frac{\alpha _{1}}{4} & 1 & 0 \\ 
-\frac{\alpha _{2}}{2}-\frac{\alpha _{1}\alpha _{3}}{4} & 0 & \alpha _{3}%
\end{pmatrix}%
.
\end{equation*}%
Then \begin{center}$\theta \ast \phi \in \big\{\big( 2\Delta _{11}, \ -\Delta_{12},\ 2\Delta _{13}\big), \ 
\big( 2\Delta_{11}, \ -\Delta _{12},\ \Delta _{12}+2\Delta_{13}\big) \big\}$. 
 \end{center}
Thus we obtain the algebras ${\rm N}_{06}$ and ${\rm N}_{09}$.

\item $\theta =\eta _{5}$. Let $\phi=\varphi_1$  if $\alpha _{3}=0$ or $\phi=\varphi_2$ if $\alpha _{3}\neq 0$.%
\begin{equation*}
\varphi_1=\begin{pmatrix}
1 & 0 & 0 \\ 
-\frac{\alpha _{1}}{2} & 1 & 0 \\ 
0 & 0 & \alpha _{2}%
\end{pmatrix}%
, \ 
\varphi_2=\begin{pmatrix}
1 & 0 & 0 \\ 
-\frac{\alpha _{1}}{2} & 1 & 0 \\ 
-\frac{\alpha _{2}}{\alpha _{3}} & 0 & \frac{\alpha _{2}}{\alpha _{3}^{2}}%
\end{pmatrix}%
.
\end{equation*}%
Then $\theta \ast \phi \in \big\{\big( 0, \ \Delta _{12},\ \Delta _{11}\big), \ 
\big( 0,\ \Delta _{12},\ \Delta _{33}\big) \big\}.$
 We obtain the algebras ${\rm N}_{10}$ and ${\rm N}_{11}$.

\item $\theta =\eta _{6}$. If $\alpha _{1}=\alpha _{2}=0$, we get the
algebra ${\rm N}_{05}^{0}$. Assume now that $\big( \alpha
_{1},\alpha _{2}\big) \neq \big( 0,0\big) $. Let $\phi=\varphi_1$  if $\alpha _{1}\neq 0$ and $\alpha _{2}=0$, or $\phi=\varphi_2$ if $\alpha _{1}=0$ and $\alpha _{2}\neq 0$, or 
$\phi=\varphi_3$  if $\alpha
_{1}\alpha _{2}\neq 0$:%
\begin{equation*}
\varphi_1=\begin{pmatrix}
1 & 0 & 0 \\ 
0 &  \alpha^{-\frac{1}{2}}_{1} & 0 \\ 
0 & 0 & 1%
\end{pmatrix}%
, \ 
\varphi_2=\begin{pmatrix}
1 & 0 & 0 \\ 
0 & 1 & 0 \\ 
0 & 0 & {\alpha^{-1}_{2}}%
\end{pmatrix}%
,  \ 
\varphi_3=\begin{pmatrix}
1 & 0 & 0 \\ 
0 &  \alpha^{-\frac{1}{2}}_{1}  & 0 \\ 
0 & 0 &  {\alpha^{-1}_{2}}%
\end{pmatrix}%
.
\end{equation*}%
Then \begin{center} $\theta \ast \phi \in \big\{\big( \Delta _{22},\ \Delta_{12},\ 0\big), \ 
\big( 0,\ \Delta _{12},\ \Delta _{33}\big), \ \big( \Delta _{22},\ \Delta _{12},\ \Delta_{33}\big) \big\}$.
 \end{center} Thus we obtain the algebras ${\rm N}_{12},$ \ ${\rm N}_{11}$ and ${\rm N}_{13}$.

\item $\theta =\eta _{7}$. Let $\phi $ be the following automorphism: 
\begin{equation*}
\phi =%
\begin{pmatrix}
1 & 0 & 0 \\ 
-\frac{\alpha _{1}}{2} & 1 & 0 \\ 
0 & 0 & 1%
\end{pmatrix}%
.
\end{equation*}%
Then $\theta \ast \phi =\big( 0,\ \Delta _{12},\ \alpha
_{2}\Delta _{33}\big) $. So we are back in the case $\theta =\eta
_{6}$.

\item $\theta =\eta _{8}$. We may assume $\big( \alpha _{1},\alpha
_{2}\big) \neq \big( 0,0\big) $ since otherwise we are back in the case 
$\theta =\eta _{6}$. 
Let $\phi=\varphi_1 $  if $\alpha _{1}\neq 0$ and $\alpha _{2}=0$, or 
$\phi=\varphi_2$   if $\alpha _{1}=0$ and $\alpha _{2}\neq 0$, or 
$\phi=\varphi_3$  if $\alpha _{1}\alpha _{2}\neq 0$:%
\begin{equation*}
\varphi_1=\begin{pmatrix}
1 & 0 & 0 \\ 
0 & 1 & 0 \\ 
0 & 0 & \alpha _{1}%
\end{pmatrix}%
, \ 
\varphi_2=\begin{pmatrix}
1 & 0 & 0 \\ 
0 & 1 & 0 \\ 
0 & 0 & \alpha _{2}%
\end{pmatrix}%
, \ 
\varphi_3=\begin{pmatrix}
1 & 0 & 0 \\ 
0 & \sqrt{{\alpha _{1}}{\alpha^{-1}_{2}}} & 0 \\ 
0 & 0 & \alpha _{1}%
\end{pmatrix}%
.
\end{equation*}%
Then \begin{center}$\theta \ast \phi \in \big\{\big( 0, \ \Delta _{12}, \ \Delta_{11}\big), \ 
\big( 0, \ \Delta _{12}, \ \Delta _{22}\big),
\big( 0, \ \Delta _{12}, \ \Delta _{11}+\Delta_{22}\big) \big\}$. 
 \end{center}
Thus we obtain the algebras ${\rm N}_{10},$ \ ${\rm N}_{14}$, and ${\rm N}_{15}$.

\item $\theta =\eta _{9}$. We may assume $\alpha _{2}\neq 0$ since otherwise
we are back in the case $\theta =\eta _{7}$. Let $\phi $ be the following
automorphism:%
\begin{equation*}
\phi =%
\begin{pmatrix}
1 & 0 & 0 \\ 
-\frac{\alpha _{1}}{2} & 1 & 0 \\ 
0 & 0 & \alpha _{2}%
\end{pmatrix}%
.
\end{equation*}%
Then $\theta \ast \phi =\big( 0, \ \Delta _{12},\ \Delta
_{11}\big) $. So we get the algebra ${\rm N}_{10}$.

\item $\theta =\eta _{10}$. Let $\phi $ be the following automorphism:%
\begin{equation*}
\phi =%
\begin{pmatrix}
1 & 0 & 0 \\ 
-{\alpha^{-1} _{2}}{\alpha _{3}} & 1 & 0 \\ 
0 & 0 & \alpha _{2}%
\end{pmatrix}%
.
\end{equation*}%
Then $\theta \ast \phi =\big( 0,\ \Delta _{12},\ ({\alpha_{1}\alpha _{2}-\alpha _{3}^{2}}){\alpha _{2}^{-2}}\Delta
_{11}+\Delta _{22}\big) $. So we have  the case $\theta
=\eta _{8}$.

\item $\theta =\eta _{11}$. Let $\phi $ be the following automorphism:%
\begin{equation*}
\phi =%
\begin{pmatrix}
1 & 0 & 0 \\ 
0 &  {\alpha^{-\frac{1}{2}} _{1}} & 0 \\ 
 {\alpha^{-1} _{1}}{\alpha _{3}} & 0 & 1%
\end{pmatrix}%
.
\end{equation*}%
Then $\theta \ast \phi =\big( \Delta _{22},\ 
\Delta _{12},\ -{\alpha _{1}\alpha _{2}}{\alpha^{-1} _{3}}\Delta _{33}\big) $. So
we are back in the case $\theta =\eta _{6}$.

\item $\theta =\eta _{12}$. Let $\phi $ be the following automorphism:%
\begin{equation*}
\phi =%
\begin{pmatrix}
1 & 0 & 0 \\ 
({\alpha _{1}-1}){\alpha^{-1} _{2}} & {\alpha^{-1} _{2}}\sqrt{1-\alpha _{1}} & 0 \\ 
0 & 0 & 1%
\end{pmatrix}%
.
\end{equation*}%
Then $\theta \ast \phi =\big( \Delta _{22}, \ \Delta_{12}, \ \alpha _{3}\Delta _{33}\big) $. So we are back in the case $%
\theta =\eta _{6}$.

\item $\theta =\eta _{13}$. Let $\phi $ be the following automorphism:%
\begin{equation*}
\phi =%
\begin{pmatrix}
1 & 0 & 0 \\ 
({\alpha _{1}-1}){\alpha^{-1}_{2}} & {\alpha^{-1}_{2}}\sqrt{1-\alpha _{1}} & 0 \\ 
{\alpha^{-1} _{2}}{\alpha _{3}} & 0 & 1%
\end{pmatrix}%
.
\end{equation*}%
Then $\theta \ast \phi =\big( \Delta _{22},\ \Delta _{12}, \ - {\alpha _{2}}{\alpha^{-1} _{3}}\alpha _{4}\Delta _{33}\big) $. So
we are back in the case $\theta =\eta _{6}$.
\end{itemize}

\item 
\underline{${\rm N}^{-}=\mathcal{L}_{03}^{1}$.} Choose an
arbitrary element $\theta =\big( B_{1},B_{2},B_{3}\big) \in {\rm Z}_{-1}^{2}\big( \mathcal{L}_{03}^{1},\mathcal{L}_{03}^{1}\big) $. Then $\theta \in \big\{ \eta _{1},\eta _{2}\big\} $ where%
\begin{longtable}{lcl}
$\eta _{1} $&$=$&$\big( 2\Delta _{11},\ \alpha _{1}\Delta_{11}-\Delta _{12},\ \alpha _{2}\Delta _{11}-\Delta
_{13}\big) ,$ \\
$\eta _{2} $&$=$&$\big( 0,\ \alpha _{1}\Delta _{11}+\Delta_{12},\ \alpha _{2}\Delta _{11}+\Delta _{13}\big) ,$
\end{longtable}%
for some $\alpha _{1},\alpha _{2}\in \mathbb{C}$. The automorphism group of $%
\mathcal{L}_{03}^{1}$, $\mathrm{Aut}\big( \mathcal{L}_{03}^{1}\big) $, consists of the invertible matrices of the following form:%
\begin{equation*}
\phi =%
\begin{pmatrix}
1 & 0 & 0 \\ 
a_{21} & a_{22} & a_{23} \\ 
a_{31} & a_{32} & a_{33}%
\end{pmatrix}%
.
\end{equation*}

\begin{itemize}
\item $\theta =\eta _{1}$. Let $\phi $ be the following automorphism:%
\begin{equation*}
\phi =\allowbreak 
\begin{pmatrix}
1 & 0 & 0 \\ 
\frac{\alpha _{1} }{4}& 1 & 0 \\ 
\frac{\alpha _{2}}{4} & 0 & 1%
\end{pmatrix}%
.
\end{equation*}%
Then $\theta \ast \phi =\big( 2\Delta _{11},-\Delta
_{12},-\Delta _{13}\big) $. Thus we obtain the algebra ${\rm N}_{16}$.

\item $\theta =\eta _{2}$. Let $\phi $ be the following automorphism:%
\begin{equation*}
\phi =\allowbreak 
\begin{pmatrix}
1 & 0 & 0 \\ 
-\frac{1}{2}\alpha _{1} & 1 & 0 \\ 
-\frac{1}{2}\alpha _{2} & 0 & 1%
\end{pmatrix}%
.
\end{equation*}%
Then $\theta \ast \phi =\big( 0,\ \Delta _{12},\ \Delta
_{13}\big) $. Thus we obtain the algebra ${\rm N}_{05}^{1}$.
\end{itemize}

\item 
\underline{${\rm N}^{-}=\mathcal{L}_{03}^{\alpha \notin \{0,\pm 1\}}$.} Choose
an arbitrary element \begin{center}
    $\theta =\big( B_{1},B_{2},B_{3}\big) \in {\rm Z}_{-1}^{2}\big( \mathcal{L}_{03}^{\alpha \notin \{0,\pm 1\}},\mathcal{L}%
_{03}^{\alpha \notin \{0,\pm 1\}}\big) $.
\end{center} Then $\theta = \big(0,\ \alpha_{1}\Delta _{11}+\Delta _{12}, \ \alpha _{2}\Delta_{11}+\alpha \Delta _{13}\big)$ for some $\alpha _{1},\alpha _{2}\in 
\mathbb{C}$. The automorphism group of $\mathcal{L}_{03}^{\alpha \notin \{ 0,\pm
1\}}$, $\mathrm{Aut}\big( \mathcal{L}_{03}^{\alpha \notin \{0,\pm 1\}}\big) $,
consists of the invertible matrices of the following form:%
\begin{equation*}
\phi =%
\begin{pmatrix}
1 & 0 & 0 \\ 
a_{21} & a_{22} & 0 \\ 
a_{31} & 0 & a_{33}%
\end{pmatrix}%
.
\end{equation*}%
Let $\phi $ be the following automorphism:

\begin{equation*}
\phi =%
\begin{pmatrix}
1 & 0 & 0 \\ 
-\frac{\alpha _{1}}{2} & 1 & 0 \\ 
-\frac{\alpha _{2}}{2\alpha } & 0 & 1%
\end{pmatrix}%
.
\end{equation*}%
Then $\theta \ast \phi =\big( 0,\ \Delta _{12},\ \alpha \Delta_{13}\big) $. So we have the algebras ${\rm N}_{05}^{\alpha \notin \{0,\pm 1\}}$.
Moreover, the algebras ${\rm N}_{05}^{\alpha }$ and ${\rm N}%
_{05}^{\beta }$\ are isomorphic if and only if $  \alpha =\beta^{-1}$.

\end{enumerate}
\end{proof}

\subsection{The algebraic classification of $\delta $-Novikov algebras $\big(\delta \notin \big\{0,\pm 1\big\}\big)$}

\begin{theoremA2}
Let $\mathcal{G}$\ be a complex $3$-dimensional $\delta $-Novikov algebra 
with $\delta \notin \big\{ 0,\pm 1\big\}$. Then $\mathcal{G}$ is an associative commutative
algebra listed in {\rm Proposition \ref{3-dim assoc}} or isomorphic to one of the
following algebras\footnote{For receiving similar multiplication tables we have to apply the basis change $e_1:=\frac12e_1$ in algebras $\mathcal{G}_{04},\mathcal{G}_{05}^{\alpha},\mathcal{G}_{06}^{\delta},\mathcal{G}_{07}$ and the basis change $e_1:=-\frac12e_1$ in algebras $\mathcal{G}_{09}^{\delta},\mathcal{G}_{10}^{\delta},\mathcal{G}_{11}^{\delta}$, and $\mathcal{G}_{12}^{\delta}$.}:

\begin{longtable}{lllllll}
$\mathcal{G}_{01}$ & $:$ & $e_{1}\cdot e_{2}=e_{3}$ & $e_{2}\cdot e_{1}=-e_{3}$ \\

$\mathcal{G}_{02}^{\alpha}$ & $:$ & $e_{1}\cdot e_{1}=e_{3}$ & $e_{1}\cdot e_{2}=e_{3}$ & $e_{2}\cdot e_{1}=-e_{3}$ & $e_{2}\cdot e_{2}=\alpha e_{3}$ \\

$\mathcal{G}_{03}^{\alpha}$ & $:$ & $e_{1}\cdot e_{1}=e_{2}$ & $e_{1}\cdot e_{2}=\big( \alpha +1\big) e_{3}$ & $e_{2}\cdot e_{1}=\big( \alpha -1\big) e_{3}$ \\


$\mathcal{G}_{04}$ & $:$ & $e_{1}\cdot e_{2}=e_{2}$ & $e_{1}\cdot e_{3}=e_{2}+e_{3}$ \\


$\mathcal{G}_{05}^{\alpha}$ & $:$ & $e_{1}\cdot e_{2}=e_{2}$ & $e_{1}\cdot e_{3}=\alpha e_{3}$ \\


$\mathcal{G}_{06}^{\delta}$ & $:$ & $e_{1}\cdot e_{2}=e_{2}$  &  $e_{1}\cdot e_{3}=\big( \delta +1\big) e_{3}$ & $e_{2}\cdot e_{2}=e_{3}$\\


$\mathcal{G}_{07}$ & $:$ & $e_{1}\cdot e_{2}=e_{2}$ & $e_{3}\cdot e_{3}=e_{3}$ \\

$\mathcal{G}_{08}$ & $:$ & $e_{1}\cdot e_{1}=e_{3}$ & $e_{1}\cdot e_{2}=2e_{2}$ \\


$\mathcal{G}_{09}^{\delta}$ & $:$ & $e_{1}\cdot e_{1}=\delta e_{1}$ & $e_{2}\cdot e_{1}=e_{2}$ \\


$\mathcal{G}_{10}^{\delta}$ & $:$ & $e_{1}\cdot e_{1}=\delta e_{1}$ & $e_{2}\cdot e_{1}=e_{2}$ & $e_{1}\cdot e_{3}=\delta e_{3}$ & $e_{3}\cdot e_{1}=\delta e_{3}$ \\


$\mathcal{G}_{11}^{\delta}$ & $:$ & $e_{1}\cdot e_{1}=\delta e_{1}$ & $e_{2}\cdot e_{1}=e_{2}$ & $e_{3}\cdot e_{3}=e_{3}$ \\


$\mathcal{G}_{12}^{\delta}$ & $:$ & $e_{1}\cdot e_{1}=\delta e_{1}$ & $e_{2}\cdot e_{1}=e_{2}$ & $e_{3}\cdot e_{1}=e_{3}$ \\
\end{longtable}

\noindent
All listed algebras are non-isomorphic except: $\mathcal{G}_{05}^{\alpha
}\cong \mathcal{G}_{05}^{\alpha ^{-1}}$.
\end{theoremA2}

\begin{proof}
Let $\mathcal{G}$\ be a complex $3$-dimensional $\delta$-Novikov algebra with $\delta \notin \{ 0,\pm 1\}$.
Due to \cite[Lemma 31]{kv16} we have that $\mathcal{G}^{-}$ is metabelian. If $\mathcal{G}^{-}$ has the zero multiplication, then 
 $
\mathcal{G}$\ is commutative and associative. Otherwise, by Proposition \ref%
{3-dim metalbleian}, we may assume $\mathcal{G}^{-}\in \{\mathcal{L}_{01},%
\mathcal{L}_{02},\mathcal{L}_{03}^{\alpha }\}$. So we study the following
cases:

    \begin{enumerate}[I.]
\item \underline{$\mathcal{G}^{-}=\mathcal{L}_{01}$.} Then ${\rm Z}_{\delta }^{2}\big( 
\mathcal{L}_{01},\mathcal{L}_{01}\big) ={\rm Z}_{-1}^{2}\big( \mathcal{L%
}_{01},\mathcal{L}_{01}\big) $. So we obtain the algebras $\mathcal{G}_{01},$ \ $\mathcal{G}_{02},$ and  $\mathcal{G}_{03}^{\alpha }$.

\item 
\underline{$\mathcal{G}^{-}=\mathcal{L}_{02}$.} Then ${\rm Z}_{\delta }^{2}\big( 
\mathcal{L}_{02},\mathcal{L}_{02}\big) ={\rm Z}_{-1}^{2}\big( \mathcal{L%
}_{02},\mathcal{L}_{02}\big) $. So we obtain the algebra $\mathcal{G}_{04}$%
.

\item 
\underline{$\mathcal{G}^{-}=\mathcal{L}_{03}^{-1}$.} Assume first
that $\delta \neq -2$. Then ${\rm Z}_{\delta }^{2}\big( \mathcal{L}_{03}^{-1},\mathcal{L}_{03}^{-1}\big) ={\rm Z}_{-1}^{2}\big( \mathcal{%
L}_{03}^{-1},\mathcal{L}_{03}^{-1}\big) $. Hence we get
the algebra $\mathcal{G}_{05}^{-1}$. 

\item 
\underline{$\mathcal{G}^{-}=\mathcal{L}_{03}^{-1}$.} 
Assume now that $\delta =-2$.
Choose an arbitrary element $\theta =\big( B_{1},B_{2},B_{3}\big) \in
{\rm Z}_{-2}^{2}\big( \mathcal{L}_{01},\mathcal{L}_{01}\big) $. Then $%
\theta \in \left\{ \eta _{1},\ldots ,\eta _{5} \right\} $ where%
\begin{longtable}{lcl}
$\eta _{1} $&$=$&$\big( 0,\Delta _{12},\alpha _{1}\Delta
_{11}+\alpha _{2}\Delta _{22}-\Delta _{13}\big) ,$ \\
$\eta _{2} $&$=$&$\big( 0,\alpha _{1}\Delta _{11}+\alpha
_{2}\Delta _{33}+\Delta _{12},-\Delta _{13}\big) ,$
\\
$\eta _{3} $&$=$&$\big( 0,\alpha _{1}\Delta _{11}+\Delta
_{12},\alpha _{2}\Delta _{11}-\Delta _{13}\big),$ 
\\
$\eta _{4} $&$=$&$\big( 0,{2{\alpha^{-1} _{2}}\alpha _{3}} \Delta
_{11}+\Delta _{12},\alpha _{1}\Delta _{11}+\alpha
_{2}\Delta _{22}+\alpha _{3}\Delta _{12}-\Delta
_{13}\big) ,$ \\
$\eta _{5} $&$=$&$\big( 0,\alpha _{1}\Delta _{11}+\alpha
_{2}\Delta _{33}+\Delta _{12}+\alpha _{3}\Delta
_{13},-{2{\alpha^{-1} _{2}}\alpha _{3}}\Delta _{11}-\Delta
_{13}\big) ,$
\end{longtable}%
for some $\alpha _{1},\alpha _{2},\alpha _{3},\alpha _{4}\in \mathbb{C}$.

\begin{itemize}
\item $\theta =\eta _{1}$. Let $\phi=\varphi_1$  if $\alpha _{2}=0,$ or 
$\phi=\varphi_2$   if $\alpha _{2}\neq 0$:%
\begin{equation*}
\varphi_1=\begin{pmatrix}
1 & 0 & 0 \\ 
0 & 1 & 0 \\ 
\frac{\alpha _{1}}{2} & 0 & 1%
\end{pmatrix}%
, \ 
\varphi_2=\begin{pmatrix}
1 & 0 & 0 \\ 
0 & 1 & 0 \\ 
\frac{\alpha _{1}}{2} & 0 & \alpha _{2}%
\end{pmatrix}%
.
\end{equation*}%
Then $\theta \ast \phi \in \big\{\big( 0,\ \Delta _{12},\ -\Delta
_{13}\big) , \ \big( 0,\ \Delta _{12},\ \Delta
_{22}-\Delta _{13}\big) \big\}$. We get  $\mathcal{G%
}_{05}^{-1}$ and   $\mathcal{G}_{06}^{-2}$.

\item $\theta =\eta _{2}$. Let $\phi=\varphi_1$   if $\alpha _{2}=0,$ or 
$\phi=\varphi_2$  if $\alpha _{2}\neq 0$:%
\begin{equation*}
\varphi_1=\begin{pmatrix}
-1 & 0 & 0 \\ 
\frac{\alpha _{1}}{2} & 0 & 1 \\ 
0 & 1 & 0%
\end{pmatrix}%
, \ 
\varphi_2=\begin{pmatrix}
-1 & 0 & 0 \\ 
\frac{\alpha _{1}}{2} & 0 & \alpha _{2} \\ 
0 & 1 & 0%
\end{pmatrix}%
.
\end{equation*}%
Then $\theta \ast \phi \in \big\{\big( 0,\ \Delta _{12},\ -\Delta
_{13}\big),\ 
\big( 0,\ \Delta _{12},\ \Delta_{22}-\Delta _{13}\big) \big\}$. So we get  $\mathcal{G}%
_{05}^{-1}$ and  $\mathcal{G}_{06}^{-2}$  again.

\item $\theta =\eta _{3}$. Let $\phi $ be the following automorphism:%
\begin{equation*}
\phi =%
\begin{pmatrix}
-1 & 0 & 0 \\ 
\frac{\alpha _{1}}{2} & 0 & 1 \\ 
-\frac{\alpha _{2}}{2} & 1 & 0%
\end{pmatrix}%
.
\end{equation*}%
Then $\theta \ast \phi =\big( 0,\ \Delta _{12},\ -\Delta_{13}\big) $. Thus we obtain the algebra $\mathcal{G}_{05}^{-1}$.

\item $\theta =\eta _{4}$. Let $\phi $ be the following automorphism:%
\begin{equation*}
\phi =%
\begin{pmatrix}
1 & 0 & 0 \\ 
-\frac{\alpha _{3}}{\alpha _{2}} & 1 & 0 \\ 
-\frac{\alpha _{3}^{2}-\alpha _{1}\alpha _{2}}{2\alpha _{2}} & 0 & \alpha
_{2}%
\end{pmatrix}%
.
\end{equation*}%
Then $\theta \ast \phi =\big( 0,\ \Delta _{12},\ \Delta_{22}-\Delta _{13}\big)$. Therefore, we have the algebra   $\mathcal{G}_{06}^{-2}$.

\item $\theta =\eta _{5}$. Let $\phi $ be the following automorphism:%
\begin{equation*}
\phi =%
\begin{pmatrix}
-1 & 0 & 0 \\ 
-\frac{\alpha _{3}^{2}-\alpha _{1}\alpha _{2}}{2\alpha _{2}} & 0 & \alpha
_{2} \\ 
{\alpha^{-1}_{2}}{\alpha _{3}}   & 1 & 0%
\end{pmatrix}%
.
\end{equation*}%
Then $\theta \ast \phi =\big( 0,\ \Delta _{12},\ \Delta_{22}-\Delta _{13}\big) $. Therefore we have the algebra   $\mathcal{G}_{06}^{-2}$.
\end{itemize}
\item 
\underline{$\mathcal{G}^{-}=\mathcal{L}_{03}^{0}$.} Choose an
arbitrary element $\theta =\big( B_{1},B_{2},B_{3}\big) \in {\rm Z}_{\delta}^{2}\big( \mathcal{L}_{03}^{ 0},\mathcal{L}_{03}^{0}\big) $. Then $\theta \in \{\eta _{1},\ldots
,\eta _{5}\}$ where%
\begin{longtable}{lcl}
$\eta _{1} $&$=$&$\big( 0,\ \alpha _{1}\Delta _{11}+\Delta_{12}, \ \alpha _{2}\Delta _{33}\big) ,$ \\
$\eta _{2} $&$=$&$\big( 0,\ \alpha _{1}\Delta _{11}+\Delta_{12}, \ \alpha _{2}\Delta _{11}+{\alpha^{-1} _{2}}{\alpha _{3}^{2}}\Delta _{33}+\alpha _{3}\Delta _{13}\big) ,$ \\
$\eta _{3} $&$=$&$\big( -2\delta \Delta _{11},\ \alpha _{1}\Delta_{11}-\Delta _{12},\ \alpha _{2}\Delta _{11}+\alpha_{3}\Delta _{13}+{\alpha^{-1} _{2}}{\alpha _{3}\big( \alpha _{3}+2\delta \big) 
} \Delta _{33}\big) ,$ \\
$\eta _{4} $&$=$&$\big( -2\delta \Delta _{11}, \ \alpha _{1}\Delta
_{11}-\Delta _{12},\ -2\delta \Delta _{13}+\alpha _{2}\Delta _{33}\big) ,$ \\
$\eta _{5} $&$=$&$\big( -2\delta \Delta _{11}, \ \alpha _{1}\Delta
_{11}-\Delta _{12}, \ \alpha _{2}\Delta _{33}\big) ,$
\end{longtable}%
for some $\alpha _{1},\alpha _{2},\alpha _{3}\in \mathbb{C}$.

\begin{itemize}
\item $\theta =\eta _{1}$. Let $\phi=\varphi_1 $  if $\alpha _{2}=0,$ or 
$\phi=\varphi_2$   if $\alpha _{2}\neq 0$:%
\begin{equation*}
\varphi_1=\begin{pmatrix}
1 & 0 & 0 \\ 
-\frac{\alpha _{1}}{2} & 1 & 0 \\ 
0 & 0 & 1%
\end{pmatrix}%
, \ 
\varphi_2=\begin{pmatrix}
1 & 0 & 0 \\ 
-\frac{\alpha _{1}}{2} & 1 & 0 \\ 
0 & 0 & \frac{1}{\alpha _{2}}%
\end{pmatrix}%
.
\end{equation*}%
Then $\theta \ast \phi \in \big\{\big( 0, \ \Delta _{12}, \ 0\big),\ \big(
0, \ \Delta _{12},\ \Delta _{33}\big) \big\}$. So we get the
algebras $\mathcal{G}_{05}^{0}$ and $\mathcal{G}_{07}$.

\item $\theta =\eta _{2}$. Let $\phi=\varphi_1$   if $\alpha _{3}=0,$ or 
$\phi=\varphi_2$  if $\alpha _{3}\neq 0$:%
\begin{equation*}
\varphi_1=\begin{pmatrix}
1 & 0 & 0 \\ 
-\frac{\alpha _{1}}{2} & 1 & 0 \\ 
0 & 0 & \alpha _{2}%
\end{pmatrix}%
, \ 
\varphi_2=\begin{pmatrix}
1 & 0 & 0 \\ 
-\frac{\alpha _{1}}{2} & 1 & 0 \\ 
-\frac{\alpha _{2}}{\alpha _{3}} & 0 & \frac{\alpha _{2}}{\alpha _{3}^{2}}%
\end{pmatrix}%
.
\end{equation*}%
Then $\theta \ast \phi \in \big\{\big( 0,\ \Delta _{12},\ \Delta _{11}\big), \ \big( 0,\ \Delta_{12},\ \Delta _{33}\big) \big\}$%
. So we get the algebras $\mathcal{G}_{08}$ and $\mathcal{G}_{07}$.

\item $\theta =\eta _{3}$. Let $\phi=\varphi_1$  if $\alpha _{3}=0$, or 
$\phi=\varphi_2$   if $\alpha _{3}=-2\delta $, or 
$\phi=\varphi_3$  if $\alpha _{3}\big( \alpha _{3}+2\delta \big) \neq 0$:%
\begin{equation*}
\varphi_1=\begin{pmatrix}
1 & 0 & 0 \\ 
\frac{\alpha _{1}}{2-2\delta } & 1 & 0 \\ 
-\frac{\alpha _{2}}{2\delta } & 0 & 1%
\end{pmatrix}%
, \ 
\varphi_2=\begin{pmatrix}
1 & 0 & 0 \\ 
\frac{\alpha _{1}}{2-2\delta } & 1 & 0 \\ 
\frac{\alpha _{2}}{2\delta } & 0 & 1%
\end{pmatrix}%
, \ 
\varphi_3=\begin{pmatrix}
1 & 0 & 0 \\ 
\frac{\alpha _{1}}{2-2\delta } & 1 & 0 \\ 
-\frac{\alpha _{2}}{2\delta +\alpha _{3}} & 0 & \frac{\alpha _{2}}{\alpha
_{3}\big( 2\delta +\alpha _{3}\big) }%
\end{pmatrix}%
.
\end{equation*}%
Then 
\begin{center}
    $\theta \ast \phi \in \big \{
\big( -2\delta \Delta_{11},\ -\Delta _{12}, \ 0\big) ,\ 
\big( -2\delta \Delta_{11},\ -\Delta _{12},\ -2\delta \Delta _{13}\big) ,\ 
\big(-2\delta \Delta _{11},\ -\Delta _{12},\ \Delta
_{33}\big) \big\}$.
\end{center} So we get the algebras $\mathcal{G}_{09}^{\delta},$ \
$\mathcal{G}_{10}^{\delta }$, and $\mathcal{G}_{11}^{\delta }$.

\item $\theta =\eta _{4}$. Let $\phi=\varphi_1$  if $\alpha _{2}=0,$ or 
$\phi=\varphi_2$  if $\alpha _{2}\neq 0$:%
\begin{equation*}
\varphi_1=\begin{pmatrix}
1 & 0 & 0 \\ 
\frac{\alpha _{1}}{2-2\delta } & 1 & 0 \\ 
0 & 0 & 1%
\end{pmatrix}%
, \
\varphi_2=\begin{pmatrix}
1 & 0 & 0 \\ 
\frac{\alpha _{1}}{2-2\delta } & 1 & 0 \\ 
\frac{2\delta }{\alpha _{2}} & 0 & \frac{1}{\alpha _{2}}%
\end{pmatrix}%
.
\end{equation*}%
Then $\theta \ast \phi \in 
\big\{\big( -2\delta \Delta_{11}, \ -\Delta _{12}, \ -2\delta \Delta _{13}\big) ,\ 
\big(-2\delta \Delta _{11}, \ -\Delta _{12},\ \Delta_{33}\big) \big\}$. 
So we get the algebras $\mathcal{G}_{10}^{\delta }$ and $%
\mathcal{G}_{11}^{\delta }$.

\item $\theta =\eta _{5}$. Let $\phi=\varphi_1$  if $\alpha _{2}=0,$ or 
$\phi=\varphi_2$  if $\alpha _{2}\neq 0$:%
\begin{equation*}
\varphi_1=\begin{pmatrix}
1 & 0 & 0 \\ 
\frac{\alpha _{1}}{2-2\delta } & 1 & 0 \\ 
0 & 0 & 1%
\end{pmatrix}%
, \
\varphi_2=\begin{pmatrix}
1 & 0 & 0 \\ 
\frac{\alpha _{1}}{2-2\delta } & 1 & 0 \\ 
0 & 0 & \frac{1}{\alpha _{2}}%
\end{pmatrix}%
.
\end{equation*}%
Then $\theta \ast \phi \in \big\{\big( -2\delta \Delta_{11},\ -\Delta _{12},\ 0\big),\ 
\big( -2\delta \Delta_{11},\ -\Delta _{12},\ \Delta _{33}\big) \big\}$. 
We get   $\mathcal{G}_{09}^{\delta }$ and $\mathcal{G}_{11}^{\delta }$.
\end{itemize}
\item 
\underline{$\mathcal{G}^{-}=\mathcal{L}_{03}^{1}$.} Choose an
arbitrary element $\theta =\big( B_{1},B_{2},B_{3}\big) \in {\rm Z}_{\delta
}^{2}\big( \mathcal{L}_{03}^{1},\mathcal{L}_{03}^{1}\big) $. Then $\theta \in \left\{ \eta _{1},\eta _{2}\right\},$ where%
\begin{eqnarray*}
\eta _{1} &=&\big( -2\delta \Delta _{11},\ 
\alpha _{1}\Delta_{11}-\Delta _{12},\ 
\alpha _{2}\Delta _{11}-\Delta_{13}\big) , \\
\eta _{2} &=&\big( 0,\ \alpha _{1}\Delta _{11}+\Delta_{12},\ \alpha _{2}\Delta _{11}+\Delta _{13}\big) ,
\end{eqnarray*}%
for some $\alpha _{1},\alpha _{2}\in \mathbb{C}$.

\begin{itemize}
\item $\theta =\eta _{1}$. Let $\phi $ be the following automorphism:%
\begin{equation*}
\phi =%
\begin{pmatrix}
1 & 0 & 0 \\ 
\frac{\alpha _{1}}{2-2\delta} & 1 & 0 \\ 
\frac{\alpha _{2}}{2-2\delta} & 0 & 1%
\end{pmatrix}%
.
\end{equation*}%
Then $\theta \ast \phi =\big( -2\delta \Delta _{11},\ -\Delta_{12},\ -\Delta _{13}\big) $. Thus we obtain the algebra $\mathcal{G%
}_{12}^{\delta }$.

\item $\theta =\eta _{2}$. Let $\phi $ be the following automorphism:%
\begin{equation*}
\phi =\allowbreak 
\begin{pmatrix}
1 & 0 & 0 \\ 
-\frac{\alpha _{1}}{2} & 1 & 0 \\ 
-\frac{\alpha _{2}}{2} & 0 & 1%
\end{pmatrix}%
.
\end{equation*}%
Then $\theta \ast \phi =\big( 0,\ \Delta _{12},\ \Delta_{13}\big) $. Thus we obtain the algebra $\mathcal{G}_{05}^{ 1}$.
\end{itemize}

\item 
\underline{$\mathcal{G}^{-}=\mathcal{L}_{03}^{\alpha \notin \big\{0,\pm 1 \big\}}$.}
Suppose first that $\alpha \in \big\{\delta +1,\ \frac{1}{\delta +1}\big\}$. Since  $%
\mathcal{L}_{03}^{\delta +1}\cong \mathcal{L}_{03}^{\frac{1}{%
\delta +1}}$, we may consider the algebra $\mathcal{L}_{03}^{\delta
+1}$. Choose an arbitrary element $\theta =\big( B_{1},B_{2},B_{3}\big)
\in {\rm Z}_{\delta }^{2}\big( \mathcal{L}_{03}^{\delta +1},\mathcal{L}%
_{03}^{ \delta +1}\big) $. Then $\theta \in \left\{ \eta _{1},\eta
_{2},\eta _{3}\right\} $ where%
\begin{longtable}{lcl}
$\eta _{1} $&$=$&$\big( 0,\ {2{\alpha^{-1}_{2}}\alpha _{3}}\Delta_{11}+\Delta _{12},\ \alpha _{1}\Delta _{11}+\alpha_{2}\Delta _{22}+\alpha _{3}\Delta _{12}+\big( \delta
+1\big) \Delta _{13}\big) ,$ \\
$\eta _{2} $&$=$&$\big( 0,\ \Delta _{12},\ \alpha _{1}\Delta_{11}+\alpha _{2}\Delta _{22}+\big( \delta +1\big) \Delta
_{13}\big) ,$ \\
$\eta _{3} $&$=$&$\big( 0, \ \alpha _{1}\Delta _{11}+\Delta_{12},\ \alpha _{2}\Delta _{11}+\big( \delta +1\big) \Delta_{13}\big) ,$
\end{longtable}%
for some $\alpha _{1},\alpha _{2},\alpha _{3}\in \mathbb{C}$.

\begin{itemize}
\item $\theta =\eta _{1}$. Let $\phi $ be the following automorphism:%
\begin{equation*}
\phi =%
\begin{pmatrix}
1 & 0 & 0 \\ 
-{\alpha^{-1} _{2}}{\alpha _{3}}  & 1 & 0 \\ 
\frac{\alpha _{3}^{2}-\alpha _{1}\alpha _{2}}{2\alpha _{2}(1+\delta)} & 0 & \alpha _{2}%
\end{pmatrix}%
.
\end{equation*}%
Then $\theta \ast \phi =\big( 0,\ \Delta _{12},\ \Delta
_{22}+\big( \delta +1\big) \Delta _{13}\big) $. So we get the
algebra $\mathcal{G}_{06}^{\delta \neq -2}$.

\item $\theta =\eta _{2}$. Let $\phi=\varphi_1$  if $\alpha _{2}=0,$ or
 $\phi=\varphi_2$  if $\alpha _{2}\neq 0$:%
\begin{equation*}
\varphi_1=\begin{pmatrix}
1 & 0 & 0 \\ 
0 & 1 & 0 \\ 
-\frac{\alpha _{1}}{2(\delta +1)} & 0 & 1%
\end{pmatrix}%
, \ 
\varphi_2=\begin{pmatrix}
1 & 0 & 0 \\ 
0 & 1 & 0 \\ 
-\frac{\alpha _{1}}{2(\delta +1)} & 0 & \alpha _{2}%
\end{pmatrix}%
.
\end{equation*}%
Then \begin{center}$\theta \ast \phi =\big \{\big( 0, \ \Delta _{12},\ \big( \delta+1\big) \Delta _{13}\big) , \ \big( 0,\ \Delta_{12},\ \Delta _{22}+\big( \delta +1\big) \Delta
_{13}\big) \big\}$.
 \end{center} So we get the algebras $\mathcal{G}_{05}^{ \delta
+1}$ and $\mathcal{G}_{06}^{\delta \neq -2}$.

\item $\theta =\eta _{3}$. Let $\phi $ be the following automorphism:%
\begin{equation*}
\phi =%
\begin{pmatrix}
1 & 0 & 0 \\ 
-\frac{\alpha _{1}}{2} & 1 & 0 \\ 
-\frac{\alpha _{2}}{2(\delta +1)} & 0 & 1%
\end{pmatrix}%
.
\end{equation*}%
Then $\theta \ast \phi =\big( 0,\ \Delta _{12},\ \big( \delta
+1\big) \Delta _{13}\big) $. So we get the algebra $\mathcal{G}%
_{05}^{\delta +1}$.
\end{itemize}

\item 
\underline{$\mathcal{G}^{-}=\mathcal{L}_{03}^{\alpha \notin \big\{0,\pm 1 \big\}}$.} Assume now that $\alpha \notin \big\{\delta +1,\frac{1}{\delta +1}\big\}$. Choose an
arbitrary element ${\theta =\big( B_{1},B_{2},B_{3}\big) \in {\rm Z}_{\delta
}^{2}\big( \mathcal{L}_{03}^{\alpha \notin \{0,\pm 1\}},\mathcal{L}_{03}^{\alpha
 \notin \{0,\pm 1\}}\big)}$. \ Then 
     $\theta =\big( 0, \ \alpha _{1}\Delta
_{11}+\Delta _{12},\ \alpha _{2}\Delta _{11}+\alpha
\Delta _{13}\big) $ for some $\alpha_1, \alpha_2 \in {\mathbb C}.$ 
Let $\phi $ be the following
automorphism: 
\begin{equation*}
\phi =%
\begin{pmatrix}
1 & 0 & 0 \\ 
-\frac{\alpha _{1}}{2} & 1 & 0 \\ 
-\frac{\alpha _{2}}{2\alpha } & 0 & 1%
\end{pmatrix}%
.
\end{equation*}%
Then $\theta \ast \phi =\big( 0,\ \Delta _{12},\ \alpha \Delta
_{13}\big) $. So we get the algebras $\mathcal{G}_{05}^{\alpha }$.
\end{enumerate}
\end{proof}

\begin{corollary}
Let $\mathcal{G}$ be a nontrivial complex $3$-dimensional $\cap$-Novikov algebra (i.e., it is a $\delta$-Novikov algebra for all possible values $\delta$). 
Then $\mathcal{G}$ is an associative commutative
algebra listed in {\rm Proposition \ref{3-dim assoc}} or isomorphic to one of the
following algebras:

\begin{longtable}{lllllll}
$\mathcal{G}_{01}$ & $:$ & $e_{1}\cdot e_{2}=e_{3}$ & $e_{2}\cdot e_{1}=-e_{3}$ \\

$\mathcal{G}_{02}^{\alpha}$ & $:$ & $e_{1}\cdot e_{1}=e_{3}$ & $e_{1}\cdot e_{2}=e_{3}$ & $e_{2}\cdot e_{1}=-e_{3}$ & $e_{2}\cdot e_{2}=\alpha e_{3}$ \\

$\mathcal{G}_{03}^{\alpha}$ & $:$ & $e_{1}\cdot e_{1}=e_{2}$ & $e_{1}\cdot e_{2}=\big( \alpha +1\big) e_{3}$ & $e_{2}\cdot e_{1}=\big( \alpha -1\big) e_{3}$ \\

$\mathcal{G}_{04}$ & $:$ & $e_{1}\cdot e_{2}=e_{2}$ & $e_{1}\cdot e_{3}=e_{2}+e_{3}$ \\

$\mathcal{G}_{05}^{\alpha}$ & $:$ & $e_{1}\cdot e_{2}=e_{2}$ & $e_{1}\cdot e_{3}=\alpha e_{3}$ \\

$\mathcal{G}_{07}$ & $:$ & $e_{1}\cdot e_{2}=e_{2}$ & $e_{3}\cdot e_{3}=e_{3}$ \\

$\mathcal{G}_{08}$ & $:$ & $e_{1}\cdot e_{1}=e_{3}$ & $e_{1}\cdot e_{2}=2e_{2}$ \\
\end{longtable}
\noindent
All listed algebras are non-isomorphic except: $\mathcal{G}_{05}^{\alpha
}\cong \mathcal{G}_{05}^{\alpha ^{-1}}$.
\end{corollary}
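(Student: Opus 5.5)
A $\cap$-Novikov algebra is in particular a $\delta$-Novikov algebra for some fixed $\delta\notin\{0,\pm 1\}$, say $\delta=2$. So the plan is to start from the list of Theorem A2 and keep exactly those algebras that satisfy the defining identities for every $\delta$. Both identities are needed uniformly in $\delta$. The right-commutative identity $(xy)z=(xz)y$ does not depend on $\delta$, and the first identity is linear in $\delta$. Hence $\mathcal{G}$ is $\delta$-Novikov for all $\delta$ if and only if it is right commutative and satisfies the two separate identities
\[
(xy)z=(yx)z, \qquad x(yz)=y(xz).
\]
Equivalently, $\mathcal{G}$ is bicommutative and $[\mathcal{G},\mathcal{G}]\cdot\mathcal{G}=0$, where $[\cdot,\cdot]$ denotes the commutator product. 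This is the criterion I will check against each algebra.

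I will first dispose of the easy cases. Every associative commutative algebra of Proposition \ref{3-dim assoc} clearly survives. For $\mathcal{G}_{01}$, $\mathcal{G}_{02}^{\alpha}$ and $\mathcal{G}_{03}^{\alpha}$, all products of three elements vanish: the product lands in $\langle e_3\rangle$, or in $\langle e_2,e_3\rangle$ for $\mathcal{G}_{03}^{\alpha}$, and one more step kills it. So these satisfy every identity of degree three. For $\mathcal{G}_{04}$, $\mathcal{G}_{05}^{\alpha}$ and $\mathcal{G}_{07}$, the only nonzero products are $e_1\cdot v$ with $v\in\langle e_2,e_3\rangle$, which act by commuting operators, plus possibly $e_3e_3=e_3$. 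Checking $(xy)z=(yx)z=(xz)y$ and $x(yz)=y(xz)$ on basis triples is then immediate. The case $\mathcal{G}_{08}$ is handled the same way, since $e_1e_1=e_3$ and $e_3$ annihilates everything.

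Next I will show that the remaining algebras fail for some $\delta$. For $\mathcal{G}_{06}^{\delta}$ (with $e_1$ rescaled), take $x=e_1$ and $y=z=e_2$. Then $(e_1e_2)e_2=e_3\neq 0$, while $(e_2e_1)e_2=0$, so $(xy)z\neq(yx)z$. For $\mathcal{G}_{09}^{\delta}$ through $\mathcal{G}_{12}^{\delta}$ the structure constants themselves depend on $\delta$ (for example $e_1e_1=\delta e_1$), so I expect each of them to be $\delta$-Novikov only for its own $\delta$. To confirm this, I will take $x=e_1$, $y=e_2$, $z=e_1$. Then $(e_1e_2)e_1=0$, whereas $(e_2e_1)e_1=e_2e_1=e_2\neq 0$, which violates $(xy)z=(yx)z$. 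These algebras are therefore excluded.

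The non-isomorphism statement and the single exception $\mathcal{G}_{05}^{\alpha}\cong\mathcal{G}_{05}^{\alpha^{-1}}$ are inherited directly from Theorem A2.

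The main obstacle is a bookkeeping issue. Theorem A2 is stated only for $\delta\notin\{0,\pm1\}$, so I must make sure no algebra exists that is $\cap$-Novikov but fails to appear in that list. This cannot happen: a $\cap$-Novikov algebra is in particular $2$-Novikov, so it is already on the list for $\delta=2$. The remaining work is the routine verification of the surviving algebras, which I will do via the bicommutativity-plus-$[\mathcal{G},\mathcal{G}]\mathcal{G}=0$ criterion rather than by checking each $\delta$ separately.
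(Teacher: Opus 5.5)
Your strategy is correct and is the natural way to read the corollary off Theorem A2; the paper states the corollary without proof. You fix one admissible $\delta$ and filter the resulting list. Your remark that the first identity is affine in $\delta$ is what makes the filtering rigorous. By it, being $\delta$-Novikov for two distinct values already forces right commutativity, left commutativity and $[\mathcal{G},\mathcal{G}]\cdot\mathcal{G}=0$. This replaces a comparison of the lists for different $\delta$ by a direct identity check; for that comparison the paper's non-isomorphism statements, which are made at fixed $\delta$, would not directly suffice. It also covers $\delta\in\{0,1\}$, which Theorems A1 and A2 do not treat.

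Your exclusions are correct. You exclude $\mathcal{G}_{06}^{\delta}$ via $(e_1e_2)e_2=e_3\neq 0=(e_2e_1)e_2$, and $\mathcal{G}_{09}^{\delta},\dots,\mathcal{G}_{12}^{\delta}$ via $(e_1e_2)e_1=0\neq e_2=(e_2e_1)e_1$. Your checks for $\mathcal{G}_{01}$, $\mathcal{G}_{02}^{\alpha}$, $\mathcal{G}_{04}$, $\mathcal{G}_{05}^{\alpha}$, $\mathcal{G}_{07}$, $\mathcal{G}_{08}$ and the commutative associative algebras are also correct.

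One verification is wrong as written. For $\mathcal{G}_{03}^{\alpha}$ it is not true that all products of three elements vanish. Indeed $(e_1e_1)e_1=e_2e_1=(\alpha-1)e_3$ and $e_1(e_1e_1)=e_1e_2=(\alpha+1)e_3$, and these are never both zero. The algebra does satisfy your criterion, but for a different reason. Give $e_i$ degree $i$. Then $\mathcal{G}_{03}^{\alpha}$ is graded and every component of degree $\ge 4$ is zero. So among basis triples $(e_i,e_j,e_k)$, only $(e_1,e_1,e_1)$ can produce a nonzero triple product, in any order or bracketing. On that triple each of the multilinear identities $(xy)z=(xz)y$, $(xy)z=(yx)z$, $x(yz)=y(xz)$ compares a term with itself, so they all hold.

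Alternatively, $\mathcal{G}_{03}^{\alpha}$ comes from ${\rm Z}^2_{\delta}(\mathcal{L}_{01},\mathcal{L}_{01})={\rm Z}^2_{-1}(\mathcal{L}_{01},\mathcal{L}_{01})$ for every $\delta\notin\{0,1\}$. So it is $\delta$-Novikov for two distinct values of $\delta$, which suffices by your linearity remark. With this repair your argument is complete.
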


\newpage

\section{The geometric classification of 
  algebras}

\subsection{Preliminaries: definitions and notation}
Given an $n$-dimensional vector space $\mathbb V$, the set ${\rm Hom}(\mathbb V \otimes \mathbb V,\mathbb V) \cong \mathbb V^* \otimes \mathbb V^* \otimes \mathbb V$ is a vector space of dimension $n^3$. This space has the structure of the affine variety $\mathbb{C}^{n^3}.$ Indeed, let us fix a basis $e_1,\dots,e_n$ of $\mathbb V$. Then any $\mu\in {\rm Hom}(\mathbb V \otimes \mathbb V,\mathbb V)$ is determined by $n^3$ structure constants $c_{ij}^k\in\mathbb{C}$ such that
$\mu(e_i\otimes e_j)=\sum\limits_{k=1}^nc_{ij}^ke_k$. A subset of ${\rm Hom}(\mathbb V \otimes \mathbb V,\mathbb V)$ is {\it Zariski-closed} if it can be defined by a set of polynomial equations in the variables $c_{ij}^k$ ($1\le i,j,k\le n$).

Let $T$ be a set of polynomial identities.
The set of algebra structures on $\mathbb V$ satisfying polynomial identities from $T$ forms a Zariski-closed subset of the variety ${\rm Hom}(\mathbb V \otimes \mathbb V,\mathbb V)$. We denote this subset by $\mathbb{L}(T)$.
The general linear group ${\rm GL}(\mathbb V)$ acts on $\mathbb{L}(T)$ by conjugations:
$$ (g * \mu )(x\otimes y) = g\mu(g^{-1}x\otimes g^{-1}y)$$
for $x,y\in \mathbb V$, $\mu\in \mathbb{L}(T)\subset {\rm Hom}(\mathbb V \otimes\mathbb V, \mathbb V)$ and $g\in {\rm GL}(\mathbb V)$.
Thus, $\mathbb{L}(T)$ is decomposed into ${\rm GL}(\mathbb V)$-orbits that correspond to the isomorphism classes of algebras.
Let ${\mathcal O}(\mu)$ denote the orbit of $\mu\in\mathbb{L}(T)$ under the action of ${\rm GL}(\mathbb V)$ and $\overline{{\mathcal O}(\mu)}$ denote the Zariski closure of ${\mathcal O}(\mu)$.

Let $\bf A$ and $\bf B$ be two $n$-dimensional algebras satisfying the identities from $T$, and let $\mu,\lambda \in \mathbb{L}(T)$ represent $\bf A$ and $\bf B$, respectively.
We say that $\bf A$ degenerates to $\bf B$ and write $\bf A\to \bf B$ if $\lambda\in\overline{{\mathcal O}(\mu)}$.
Note that in this case we have $\overline{{\mathcal O}(\lambda)}\subset\overline{{\mathcal O}(\mu)}$. Hence, the definition of degeneration does not depend on the choice of $\mu$ and $\lambda$. If $\bf A\not\cong \bf B$, then the assertion $\bf A\to \bf B$ is called a {\it proper degeneration}. We write $\bf A\not\to \bf B$ if $\lambda\not\in\overline{{\mathcal O}(\mu)}$.

Let $\bf A$ be represented by $\mu\in\mathbb{L}(T)$. Then  $\bf A$ is  {\it rigid} in $\mathbb{L}(T)$ if ${\mathcal O}(\mu)$ is an open subset of $\mathbb{L}(T)$.
 Recall that a subset of a variety is called irreducible if it cannot be represented as a union of two non-trivial closed subsets.
 A maximal irreducible closed subset of a variety is called an {\it irreducible component}.
It is well known that any affine variety can be represented as a finite union of its irreducible components in a unique way.
The algebra $\bf A$ is rigid in $\mathbb{L}(T)$ if and only if $\overline{{\mathcal O}(\mu)}$ is an irreducible component of $\mathbb{L}(T)$.

\medskip

\noindent {\bf Method of the description of degenerations of algebras.} In the present work we use the methods applied to Lie algebras in \cite{GRH}.
First of all, if $\bf A\to \bf B$ and $\bf A\not\cong \bf B$, then $\mathfrak{Der}(\bf A)<\mathfrak{Der}(\bf B)$, where $\mathfrak{Der}(\bf A)$ is the   algebra of derivations of $\bf A$. We compute the dimensions of algebras of derivations and check the assertion $\bf A\to \bf B$ only for such $\bf A$ and $\bf B$ that $\mathfrak{Der}(\bf A)<\mathfrak{Der}(\bf B)$.

To prove degenerations, we construct families of matrices parametrized by $t$. Namely, let $\bf A$ and $\bf B$ be two algebras represented by the structures $\mu$ and $\lambda$ from $\mathbb{L}(T)$ respectively. Let $e_1,\dots, e_n$ be a basis of $\mathbb  V$ and $c_{ij}^k$ ($1\le i,j,k\le n$) be the structure constants of $\lambda$ in this basis. If there exist $a_i^j(t)\in\mathbb{C}$ ($1\le i,j\le n$, $t\in\mathbb{C}^*$) such that $E_i^t=\sum\limits_{j=1}^na_i^j(t)e_j$ ($1\le i\le n$) form a basis of $\mathbb V$ for any $t\in\mathbb{C}^*$, and the structure constants of $\mu$ in the basis $E_1^t,\dots, E_n^t$ are such rational functions $c_{ij}^k(t)\in\mathbb{C}[t]$ that $c_{ij}^k(0)=c_{ij}^k$, then $\bf A\to \bf B$.
In this case  $E_1^t,\dots, E_n^t$ is called a {\it parametrized basis} for $\bf A\to \bf B$.
In  case of  $E_1^t, E_2^t, \ldots, E_n^t$ is a {\it parametric basis} for ${\rm A}\to {\bf B},$ it will be denoted by
${\rm A}\xrightarrow{(E_1^t, E_2^t, \ldots, E_n^t)} {\bf B}$. 
To simplify our equations, we will use the notation $A_i=\langle e_i,\dots,e_n\rangle,\ i=1,\ldots,n$ and write simply $A_pA_q\subset A_r$ instead of $c_{ij}^k=0$ ($i\geq p$, $j\geq q$, $k< r$).


Let ${\rm A}(*):=\{ {\rm A}(\alpha)\}_{\alpha\in I}$ be a series of algebras, and let $\bf B$ be another algebra. Suppose that for $\alpha\in I$, $\bf A(\alpha)$ is represented by the structure $\mu(\alpha)\in\mathbb{L}(T)$ and $\bf B$ is represented by the structure $\lambda\in\mathbb{L}(T)$. Then we say that $\bf A(*)\to \bf B$ if $\lambda\in\overline{\{{\mathcal O}(\mu(\alpha))\}_{\alpha\in I}}$, and $\bf A(*)\not\to \bf B$ if $\lambda\not\in\overline{\{{\mathcal O}(\mu(\alpha))\}_{\alpha\in I}}$.

Let $\bf A(*)$, $\bf B$, $\mu(\alpha)$ ($\alpha\in I$) and $\lambda$ be as above. To prove $\bf A(*)\to \bf B$ it is enough to construct a family of pairs $(f(t), g(t))$ parametrized by $t\in\mathbb{C}^*$, where $f(t)\in I$ and $g(t)\in {\rm GL}(\mathbb V)$. Namely, let $e_1,\dots, e_n$ be a basis of $\mathbb V$ and $c_{ij}^k$ ($1\le i,j,k\le n$) be the structure constants of $\lambda$ in this basis. If we construct $a_i^j:\mathbb{C}^*\to \mathbb{C}$ ($1\le i,j\le n$) and $f: \mathbb{C}^* \to I$ such that $E_i^t=\sum\limits_{j=1}^na_i^j(t)e_j$ ($1\le i\le n$) form a basis of $\mathbb V$ for any  $t\in\mathbb{C}^*$, and the structure constants of $\mu({f(t)})$ in the basis $E_1^t,\dots, E_n^t$ are such rational functions $c_{ij}^k(t)\in\mathbb{C}[t]$ that $c_{ij}^k(0)=c_{ij}^k$, then $\bf A(*)\to \bf B$. In this case  $E_1^t,\dots, E_n^t$ and $f(t)$ are called a parametrized basis and a {\it parametrized index} for $\bf A(*)\to \bf B$, respectively.

We now explain how to prove $\bf A(*)\not\to\mathcal  \bf B$.
Note that if $\mathfrak{Der} ( \bf A(\alpha))  > \mathfrak{Der} (  \bf B)$ for all $\alpha\in I$ then $\bf A(*)\not\to\bf B$.
One can also use the following  Lemma, whose proof is the same as the proof of   \cite[Lemma 1.5]{GRH}.

\begin{lemma}\label{gmain}
Let $\mathfrak{B}$ be a Borel subgroup of ${\rm GL}(\mathbb V)$ and ${\rm R}\subset \mathbb{L}(T)$ be a $\mathfrak{B}$-stable closed subset.
If $\bf A(*) \to \bf B$ and for any $\alpha\in I$ the algebra $\bf A(\alpha)$ can be represented by a structure $\mu(\alpha)\in{\rm R}$, then there is $\lambda\in {\rm R}$ representing $\bf B$.
\end{lemma}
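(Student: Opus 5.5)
The plan is to follow the proof of \cite[Lemma 1.5]{GRH} almost verbatim, replacing a single degeneration by the closure of a union of orbits. Put $\mathcal{O}:=\bigcup_{\alpha\in I}\mathcal{O}(\mu(\alpha))$. By hypothesis, each $\mu(\alpha)$ may be chosen in ${\rm R}$. Since ${\rm GL}(\mathbb V)=\mathfrak{B}\cdot K$ with $K$ a maximal compact subgroup (the Iwasawa-type decomposition $g=bk$, which is the key input in \cite{GRH}), we can write
\begin{center}
$\mathcal{O}(\mu(\alpha))=K\ast(\mathfrak{B}\ast\mu(\alpha))\subseteq K\ast {\rm R}$.
\end{center}
This uses that ${\rm R}$ is $\mathfrak{B}$-stable: $\mathfrak{B}\ast\mu(\alpha)\subseteq{\rm R}$. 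Hence $\mathcal{O}\subseteq K\ast{\rm R}$.

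Next I show that $K\ast{\rm R}$ is closed in the Euclidean topology. Since $K$ is compact and ${\rm R}$ is closed, this follows from the standard fact that the action map $K\times{\rm R}\to\mathbb{L}(T)$ is proper. Concretely, take a sequence $k_m\ast r_m\to\lambda$. Pass to a subsequence with $k_m\to k\in K$. Then $r_m=k_m^{-1}\ast(k_m\ast r_m)\to k^{-1}\ast\lambda$, and $k^{-1}\ast\lambda\in{\rm R}$ because ${\rm R}$ is closed. Since $\mathcal{O}$ is a constructible set (or is contained in one), its Zariski closure coincides with its Euclidean closure. The clean way to say this: $\overline{\mathcal{O}}$ is the Zariski closure of a union of orbits, and a point $\lambda$ in it is a Euclidean limit of points of $\mathcal{O}$, because $\mathcal{O}$ is the image of the constructible set ${\rm GL}(\mathbb V)\times\{\mu(\alpha)\}_{\alpha}$ when $I$ is parametrized algebraically. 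Granting this, $\lambda\in\overline{\mathcal{O}}\subseteq K\ast{\rm R}$. So $\lambda=k\ast r$ with $r\in{\rm R}$, and $r=k^{-1}\ast\lambda$ represents the same algebra $\bf B$.

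The main obstacle is the passage from Zariski closure to Euclidean limits, i.e.\ making sure $\overline{\mathcal{O}}$ is a Euclidean closure. For a single orbit this is classical, since orbits are constructible. For the series, I will use that in this paper the families $\{\mu(\alpha)\}$ are given by polynomial (or rational) parametrizations over $I$. This makes $\mathcal{O}$ constructible, and Zariski and Euclidean closures of constructible sets agree over $\mathbb{C}$. This is exactly the step where the argument of \cite[Lemma 1.5]{GRH} is adapted, and I would state it as the one additional assumption implicit in the notion of a series.
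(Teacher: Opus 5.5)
\textbf{There is a genuine gap: your argument proves the lemma only under an extra hypothesis on the index set $I$.} Your reduction $\mathcal{O}(\mu(\alpha))=K\ast(\mathfrak{B}\ast\mu(\alpha))\subseteq K\ast{\rm R}$ (using ${\rm GL}(\mathbb V)=K\mathfrak{B}$) is correct, and so is your sequential argument that $K\ast{\rm R}$ is closed in the Euclidean topology. The failure is in passing from the Euclidean to the Zariski topology.

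The lemma is stated for an arbitrary series $\{{\rm A}(\alpha)\}_{\alpha\in I}$, with no algebraic structure on $I$. In that generality $\mathcal{O}=\bigcup_{\alpha\in I}\mathcal{O}(\mu(\alpha))$ need not be constructible. Its Zariski closure, which is what ${\rm A}(*)\to{\bf B}$ refers to, can then be strictly larger than its Euclidean closure. Compare $\mathbb{Z}\subset\mathbb{C}$, or a polynomial family such as $\mathcal{G}_{05}^{\alpha}$ restricted to $\alpha\in\mathbb{Z}$. So Euclidean-closedness of $K\ast{\rm R}$ alone does not give $\overline{\mathcal{O}}\subseteq K\ast{\rm R}$.

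Your parenthetical ``or is contained in one'' does not fix this. Lying inside a constructible set says nothing about the closure of $\mathcal{O}$ itself. Your argument does cover the single-algebra case, because orbits are constructible, and that is how the lemma is actually applied in Theorems G1 and G2. But the ``additional assumption implicit in the notion of a series'' is not part of the statement.

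\textbf{The fix: use constructibility for $K\ast{\rm R}$ instead of for $\mathcal{O}$.} The extra hypothesis is unnecessary.

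\begin{itemize}
\item Because $\mathfrak{B}\ast{\rm R}\subseteq{\rm R}$, you have $K\ast{\rm R}={\rm GL}(\mathbb V)\ast{\rm R}$.
\item This set is the image of the morphism ${\rm GL}(\mathbb V)\times{\rm R}\to\mathbb{L}(T)$, $(g,r)\mapsto g\ast r$, so it is constructible by Chevalley's theorem.
\item The Zariski and Euclidean closures of a constructible set coincide. Your Euclidean-closedness result therefore shows that ${\rm GL}(\mathbb V)\ast{\rm R}$ is Zariski-closed.
\item Then $\mathcal{O}\subseteq{\rm GL}(\mathbb V)\ast{\rm R}$ gives $\overline{\mathcal{O}}\subseteq{\rm GL}(\mathbb V)\ast{\rm R}$ for every index set $I$.
\item Hence $\lambda=g\ast r$, and $r=g^{-1}\ast\lambda\in{\rm R}$ represents ${\bf B}$.
\end{itemize}

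A purely algebraic alternative: ${\rm GL}(\mathbb V)\ast{\rm R}$ is the projection to $\mathbb{L}(T)$ of the closed set $\{(g\mathfrak{B},x) : g^{-1}\ast x\in{\rm R}\}\subseteq{\rm GL}(\mathbb V)/\mathfrak{B}\times\mathbb{L}(T)$. This set is well defined because ${\rm R}$ is $\mathfrak{B}$-stable. The projection is closed because ${\rm GL}(\mathbb V)/\mathfrak{B}$ is complete.

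This is what justifies the paper's claim that the proof of \cite[Lemma~1.5]{GRH} carries over to series unchanged. Once ${\rm GL}(\mathbb V)\ast{\rm R}$ is known to be Zariski-closed, passing from one orbit to a union of orbits costs nothing.
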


\subsection{The geometric classification of   
  $\cap$-Novikov algebras}

\begin{theoremG0}\label{thm:geo_cap}
The variety of complex $3$-dimensional $\cap$-Novikov algebras has dimension $9$ and it has $3$ irreducible components defined by  
\begin{center}
$\mathcal{C}_1=\overline{\mathcal{O}( {\rm A}_{07})},$ \
$\mathcal{C}_2=\overline{\mathcal{O}( \mathcal{G}_{05}^{\alpha})},$ \ and
$\mathcal{C}_3=\overline{\mathcal{O}( \mathcal{G}_{07})}.$ \
\end{center}
In particular, there are only $2$ rigid algebras in this variety.
\end{theoremG0}

\begin{proof}
After carefully  checking  the dimensions of orbit closures of the more important for us algebras, we have 

\begin{longtable}{rclrclrclrcl}
$\dim \mathcal{O}({\rm A}_{07})$&$=$&$9,$ &
$\dim \mathcal{O}(\mathcal{G}_{05}^{\alpha})$&$=$&$8,$ & 
$\dim \mathcal{O}(\mathcal{G}_{07})$&$=$&$ 8$. &
\end{longtable}
\noindent
Thanks to~\cite{MS}, ${\rm A}_{07}$ is rigid in the variety of associative commutative algebras and each commutative associative algebra is in the irreducible component defined by ${\rm A}_{07}$. Since ${\rm A}_{07}$ is commutative, we have
${\rm A}_{07} \not\to \big\{\mathcal{G}_{05}^{\alpha},\  \mathcal{G}_{07}  \big\}$.

All necessary degenerations are given below 
\begin{longtable}{|lcl|}
\hline
    $\mathcal{G}_{02}^{0} $&$\xrightarrow{ (te_1,\ e_2,\ te_3)} $&$\mathcal{G}_{01}$  \\
\hline
    $\mathcal{G}_{03}^{i\sqrt{\alpha}} $&$\xrightarrow{ (te_1-it^{-1}\alpha^{-1/2}e_2+i(t^4-1)t^{-3}\alpha^{-1/2}e_3,\ -i\sqrt{\alpha}t e_1+te_3,\  e_3)} $&$\mathcal{G}_{02}^{\alpha}$  \\
\hline

        $\mathcal{G}_{07} $&$\xrightarrow{ (2t e_1+e_2+(t+\alpha t) e_3,\ 2t e_2+(1-\alpha)^2 t^2e_3,\ -(1-\alpha)^2 t^3 )} $&$\mathcal{G}_{03}^\alpha$\\
\hline

$\mathcal{G}_{05}^{t+1} $&$\xrightarrow{ ( e_1,\ te_3,\ e_2 + e_3)} $&$\mathcal{G}_{04}$ \\
\hline    
    $\mathcal{G}_{05}^{t} $&$\xrightarrow{ (2e_1+e_3,\ te_2+t^2(t-1)^{-1}e_3,\ 2te_3)} $&$\mathcal{G}_{08}$\\
\hline

\end{longtable}

\end{proof}

\subsection{The geometric classification of   
  $\delta$-Novikov algebras  $\big(\delta \notin \big\{0,\pm 1\big\}\big)$}

\begin{theoremG1}\label{thm:geo_delt}
The variety of complex $3$-dimensional $\delta$-Novikov algebras $(\delta \notin \{0,\pm 1\})$ 
has dimension $9$ and it has $6$ irreducible components defined by  
\begin{center}
$\mathcal{C}_1=\overline{\mathcal{O}( {\rm A}_{07})},$ \
$\mathcal{C}_2=\overline{\mathcal{O}( \mathcal{G}_{05}^{\alpha})},$ \  
$\mathcal{C}_3=\overline{\mathcal{O}( \mathcal{G}_{06}^{\delta})},$ \  
$\mathcal{C}_4=\overline{\mathcal{O}( \mathcal{G}_{07})},$ \  
$\mathcal{C}_5=\overline{\mathcal{O}( \mathcal{G}_{11}^{\delta})},$ \ and
$\mathcal{C}_6=\overline{\mathcal{O}( \mathcal{G}_{12}^{\delta})}.$ \
\end{center}
In particular, there are only $5$ rigid algebras in this variety.
\end{theoremG1}

\begin{proof}
After carefully  checking  the dimensions of orbit closures of the more important for us algebras, we have 

\begin{longtable}{rclrclrclrcl}
$\dim \mathcal{O}({\rm A}_{07})$&$=$&$9,$ &
$\dim \mathcal{O}(\mathcal{G}_{05}^{\alpha})$&$=$&$8,$ & 
$\dim \mathcal{O}(\mathcal{G}_{07})$&$=$&$ 8,$ \\
$\dim \mathcal{O}(\mathcal{G}_{06}^{\delta})$&$=$&$8,$ & 
$\dim \mathcal{O}(\mathcal{G}_{11}^{\delta})$&$=$&$8,$ & 
$\dim \mathcal{O}(\mathcal{G}_{12}^{\delta})$&$=$&$5.$ & 

\end{longtable}
\noindent
Thanks to~\cite{MS}, ${\rm A}_{07}$ is rigid in the variety of associative commutative algebras and each commutative associative algebra is in the irreducible component defined by ${\rm A}_{07}$. Since ${\rm A}_{07}$ is commutative, we have
${\rm A}_{07} \not\to \big\{\mathcal{G}_{05}^{\alpha},\  \mathcal{G}_{06}^{\delta}, \ \mathcal{G}_{07}, \ \mathcal{G}_{11}^{\delta}, \ 
\mathcal{G}_{12}^{\delta}\big\}$.

All necessary degenerations and non-degenerations are given in Theorem G0 and below 

 \begin{longtable}{rcl}
    $\mathcal{G}_{11}^{\delta} \ \xrightarrow{ (e_1,\ e_2,\ te_3)}  \mathcal{G}_{09}^\delta$ &  \mbox{and}  &
    $\mathcal{G}_{11}^{\delta} \ \xrightarrow{ (e_1 +\delta e_3,\ e_2,\ te_3)}\ \mathcal{G}_{10}^\delta;$ \\
$\mathcal{G}_{11}^{\delta} \not\to \mathcal{G}_{12}^{\delta}$ due to ${\mathcal R}  = \left\{  A_2^2 \subseteq A_3, c_{13}^3=c_{31}^3 \right\}$ & \mbox{and}  &$\mathcal{G}_{06}^{\delta} \not\to \mathcal{G}_{12}^{\delta}$ due to ${\mathcal R}  = \left\{  A_1^2 \subseteq A_2  \right\}.$ 

 \end{longtable}

\end{proof}

\subsection{The geometric classification of   
  anti-Novikov algebras}

\begin{theoremG2}\label{thm:geo_delt}
The variety of complex $3$-dimensional anti-Novikov algebras  
has dimension $9$ and it has $4$ irreducible components defined by  
\begin{center}
$\mathcal{C}_1=\overline{\mathcal{O}( {\rm A}_{07})},$ \
$\mathcal{C}_2=\overline{\mathcal{O}( \rm{N}_{13} )},$ \
$\mathcal{C}_3=\overline{\mathcal{O}( \mathcal{G}_{05}^{\alpha})},$ \  and 
$\mathcal{C}_4=\overline{\mathcal{O}( \mathcal{G}_{12}^{-1})}.$ \

\end{center}
In particular, there are only $3$ rigid algebras in this variety.
\end{theoremG2}

\begin{proof}
After carefully  checking  the dimensions of orbit closures of the more important for us algebras, we have 

\begin{longtable}{rclrclrclrcl}
$\dim \mathcal{O}({\rm A}_{07})$&$=$&$9,$ &
$\dim \mathcal{O}(\rm{N}_{13})$&$=$&$9,$ &
$\dim \mathcal{O}(\mathcal{G}_{05}^{\alpha})$&$=$&$8,$ & 
$\dim \mathcal{O}(\mathcal{G}_{12}^{-1})$&$=$&$5.$  

\end{longtable}
\noindent
Thanks to~\cite{MS}, ${\rm A}_{07}$ is rigid in the variety of associative commutative algebras and each commutative associative algebra is in the irreducible component defined by ${\rm A}_{07}$. Since ${\rm A}_{07}$ is commutative, we have
${\rm A}_{07} \not\to \big\{\mathcal{G}_{05}^{\alpha},\  \mathcal{G}_{12}^{-1}\big\}$.

All necessary degenerations   are given in Theorems G0 and G1, and below 

 \begin{longtable}{|lcr|lcr|}

\hline
     ${\rm N}_{13}  $&$ \xrightarrow{ ( \frac{1}{2} e_1 -\frac{1}{2}t^6 e_3,\ t e_2,\ t^8 e_3)} $&$  {\mathcal G}_{06}^{-1}$ &
     ${\rm N}_{13}  $&$ \xrightarrow{ (\frac{1}{2}e_1,\ t e_2,\  e_3)}$&$  {\mathcal G}_{07}$    

\\
\hline
     
   ${\rm N}_{13} $&$ \xrightarrow{ (-\frac{1}{2}e_1+\frac{1}{\sqrt{2}}e_2,\ t e_2,\ e_3)} $&$ {\mathcal G}_{11}^{-1}$    &
    ${\rm N}_{13}  $&$ \xrightarrow{ (e_1+\sqrt{2} e_2+2 e_3,\ \frac{t(t-1)}{2\sqrt{2}}e_2+\frac{t^2}{2}e_3,\ te_3)} $&$ {\rm N}_{09}$    \\
    \hline
    ${\rm N}_{13}  $&$ \xrightarrow{ (e_1,\ e_2,\ te_3)} $&$ {\rm N}_{12}$       &
    ${\rm  N}_{13}  $&$ \xrightarrow{ (e_1 - t^2 e_3,\ t e_2,\ t^4e_3)} $&$ {\rm  N}_{15}$    \\
\hline

 \end{longtable}

$\rm{N}_{13} \not\to \big\{\mathcal{G}_{05}^\alpha, \mathcal{G}_{12}^{-1}\big\}$ due to 
${\mathcal R}  = \left\{  A_1A_3 + A_3A_1 \subseteq A_3, \ 
c_{12}^1=c_{21}^1, \ 
c_{13}^3=c_{31}^3, \ 
c_{23}^3=c_{32}^3 \right\}.$  

\end{proof}

\end{document}